\newtheorem{theorem}{Theorem}[section]
\newtheorem{lemma}[theorem]{Lemma}
\newtheorem{proposition}[theorem]{Proposition}
\newtheorem{corollary}[theorem]{Corollary}
\theoremstyle{definition}
\newtheorem{definition}[theorem]{Definition}
\newtheorem{example}[theorem]{Example}
\theoremstyle{remark}
\newtheorem{remark}[theorem]{Remark}
\numberwithin{equation}{section}
\def\id{\mathop{\rm id}}
\def\id{{\bf 1}\!\!{\rm I}}
\def\bc{{\mathbb C}}
\def\bn{{\mathbb N}}
\def\br{{\mathbb R}}
\def\l{\lambda}
\def\a{\alpha}
\def\b{\beta}
\def\d{\delta}
\def\g{\gamma}
\def\cf{{\mathcal F}}
\def\cf{\mathcal{ F}}
\def\ck{\mathcal{K}}
\def\cw{\mathcal {W}}
\begin{document}
\setcounter{page}{1}

\title[perturbation bounds]{Perturbation bounds of Markov semigroups on abstract states spaces}

\author[Nazife Erkur\c{s}un-\"Ozcan, Farrukh Mukhamedov]{Nazife Erkur\c{s}un-\"Ozcan$^1$ and Farrukh Mukhamedov$^2$$^*$}

\address{$^{1}$ Department of Mathematics, Faculty of Science, Hacettepe University, Ankara, 06800,Turkey.}
\email{{erkursun.ozcan@hacettepe.edu.tr}}

\address{$^{2}$ Mathematical Sciences Department, College of Science, United Arab Emirates University 15551, Al-Ain United
Arab Emirates.} \email{{far75m@yandex.ru; farrukh.m@uaeu.ac.ae}}


\subjclass[2010]{Primary 47A35; Secondary  60J10, 28D05.}

\keywords{uniform asymptotically stable; $C_0$-Markov semigroup;
 ergodicity coefficient; perturbation bound}

\date{Received: xxxxxx; Revised: yyyyyy; Accepted: zzzzzz.
\newline \indent $^{*}$ Corresponding author}

\begin{abstract}

In order to successfully explore quantum systems which are perturbations of simple models, it is essential to understand the complexity of perturbation bounds. We must ask ourselves: How quantum many-body systems can be artificially engineered to produce the needed behavior. Therefore, it is convenient to make use of abstract framework to better understand classical and quantum systems. Thus, our investigation's purpose is to explore stability and perturbation bounds of positive $C_0$-semigroups on abstract state spaces using the Dobrushin's ergodicity coefficient. Consequently, we obtain a linear relation between the stability of the semigroup and the sensitivity of its fixed point with respect to perturbations of $C_0$-Markov semigroups. Our investigation leads to the discovery of perturbation bounds for the time averages of uniform asymptotically stable semigroups. 
A noteworthy mention is that we also prove the equivalence of uniform and weak ergodicities of the time averages Markov operators in terms of the ergodicity coefficient, which shines a new light onto this specific topic.
Lastly, in terms of weighted averages, unique ergodicity of semigroups is also studied. Emphasis is put on the newly obtained results which is a new discovery in classical and non-commutative settings.  
\end{abstract} \maketitle

\section{Introduction}

Constructing simplified models is the main focus to successfully capture the underlying physics of real materials, appropriate in the circumstance where explanation of their physical properties and behavior are important. For example, in quantum information theory, it is essential to find the applications of constructing simplified models by the artificially engineered quantum many-body systems to produce a specific behavior. While studying theoretical models of many-body physics, it is crucial that the properties of the model are stable under perturbations to the model
itself. If the physical predictions of a model undergo dramatic
changes when the local interactions are modified by a small amount,
it is difficult to argue that the idealized model captures the
correct physics of the real physical system. Similarly, if the
correct behavior of an engineered quantum system relies on
infinitely precise control of all the local interactions, the
proposal will not be of much practical use. Thus, it is critical to go beyond stability of closed systems, and derive stability results for
open, dissipative systems (see\cite{LC,PC,SW}) in justifying theoretical models of real, noisy physical many-body
systems. Simultaneously, in the new proposals as well, for exploiting dissipation to
carry out quantum information processing tasks. These kinds of
investigations lead to perturbation theory for quantum dynamical
systems defined on some operator algebras. Besides this, current
literature gives plenty of very recent examples of applications of
Markov chain perturbation bounds, including computational statistics
and climate science, see \cite{CRS,CLMP,KE, SN,  SW1,WGCE}. We point out that the main results of the present paper
also could be applied to the mentioned Markov chains by choosing an appropriate state space. Moreover,
more accurate perturbation bounds for them is obtainable.

If we consider a classical Markov process on a general state space $(E,\cf,\mu)$ given by
the transition probabilities $P(t,x,A)$, then one can define a semigroup  as follows: $ T_tf(x)=\int f(y)P(t,x,dy)$, which acts on
$L^1(E,\mu)$-space (see \cite{K} for more details).  Hence, the theory of perturbations \cite{Kar,Mit} can be reformulated in terms of the defined semigroup defined on $L^1(E,\mu)$-space which is an ordered Banach space (lattice)  w.r.t. the standard ordering. There are many 
papers devoted to the investigation of stability and ergodic properties of semigroups defined on Banach lattices \cite{B,K,LM}. 

On the other hand, it is convinient and important to study several properties of physical and
probabilistic processes in abstract framework due to the classical and quantum cases
confined to this scheme(see \cite{Alf}) where we can observe some aspects and applications put to work in \cite{KE,RKW}. It is noteworthy to mention that in this abstract scheme, one considers an ordered normed spaces and
mappings of these spaces (see \cite{Alf,E}). Moreover, in this
setting, certain ergodic properties of Markov operators are
considered and investigated in \cite{B,BR,CRS,EW1,SZ}.

Our purpose is to investigate stability and perturbation bounds of
positive $C_0$-semigroups defined on abstract state spaces (i.e. ordered Banach spaces). 
Roughly speaking, an abstract state space is an ordered Banach space with additivity property on the cone of positive elements. Such spaces
contain all classical $L^1$-spaces and the space of density operators acting on some Hilbert space \cite{Alf,Jam}. 
We refer the reader to \cite{BR} (see also
\cite{ALM}) for more information about the structure of positive
semigroups defined ordered spaces. In the investigation of $C_0$-semigroups it is important to employ methods of 
spectral analysis of operators.This spectral approach can be found in the standard textbooks as \cite{HP} . On non-spectral analysis of asymptotic behavior of positive semigroups especially on semigroups of positive operators on ordered Banach spaces is given in \cite{E}. 

We notice that averages of operators appear in many fields of mathematics, in theoretical physics and in other areas in science.  It is a natural question to ask its convergence behavior with respect to various kinds of convergence. In the present paper, we are going to investigate perturbation bounds of averages of $C_0$-semigroup of Markov operators defined on abstract state spaces. 
Basically, our main tool to study
asymptotical stability of the semigroups will be so-called
Dobrushin's ergodicity coefficient.  We notice that the mentioned
coefficient has been introduced in \cite{M0,M01} for positive
mappings defined on base normed spaces (or abstract
state spaces).  In \cite{EM2017} we have announced several perturbation results for single Markov operator on ordered Banach spaces. On the various convergence theorems, to determine the conditions is not easy and also many important operators can not be directly used. One of the ways is to construct the given semigroups from simpler ones. Perturbation is the standard methods for this approach. Therefore, it is important to determine a class of operators that is $A+B$ which is still the generator of a semigroup. A class of operators admissible in this sense is the class of bounded operators.
Since, the coefficient is one of the effective tools
in investigating limiting behavior of classical Markov processes (see
\cite{IS,Se2} for review), therefore, in the present paper, we are going to
study one of the questions that ask how sensitive stationary states are related
to perturbations of uniform asymptotical stability (i.e. rapid
mixing in physical terminology) of Markov semigroups.

The paper is organized as follows. In Section 3, we first establish
uniform asymptotical stability for $C_0$-Markov semigroups (defined on abstract state spaces) in terms
of Dobrushin's ergodicity coefficient. Consequently, we
obtain a linear relation between the stability of the semigroup and
the sensitivity of its fixed point with respect to perturbations of
Markov operators. Moreover, we establish perturbation bounds
for the time averages of the uniform asymptotically stable
semigroups. In Section 4, we go on to prove the equivalence of
uniform and weak ergodicities of the time averages Markov operators
in terms of the ergodicity coefficient which is a new insight to
this topic. This result allows us to produce certain perturbation
bounds for such kind of semigroups. In the last section 5, we study unique ergodicity and its relation to the convergence of weighted
averages. Note that the considered Banach space contains as a
particular case of several interesting operator spaces such as von
Neumann algebras, Jordan algebras. Therefore, our results can be
applied to semigroups acting on these spaces which have many
applications in quantum theory \cite{RKW}. We stress that when the
von Neumann algebra is taken as a matrix algebra, some particular
cases of the present paper were studied in \cite{SW}. Moreover, the
results of the last sections even new in the classical and quantum
settings.

We emphasize that in the present paper, we are going to consider a general abstract state space for which the convex hull of the base $\ck$ and $-\ck$ is not
assumed to be radially compact (in our previous papers \cite{EM2017,M0,M01} this condition was essential). This consideration has an important advantage: whenever $X$ is an ordered Banach space with 
a generating cone $X_+$ whose norm is additive on $X_+$, then $X$ admits an equivalent norm which coincides
with the original norm on $X_+$ and which renders $X$ such a base norm space. Hence, to apply the results of the paper
one would then only have to check that the norm is additive on $X_+$.  

\section{Preliminaries}

In this section we recall some necessary definitions and fact
about ordered Banach spaces.

 Let $X$ be an ordered vector space
with a cone $X_+=\{x\in X: \ x\geq 0\}$. A subset $\ck$ is called
a {\it base} for $X$, if one has $\ck=\{x\in X_+:\ f(x)=1\}$ for
some strictly positive (i.e. $f(x)>0$ for $x>0$) linear functional
$f$ on $X$. An ordered vector space $X$ with generating cone $X_+$
(i.e. $X=X_+-X_+$) and a fixed base $\ck$, defined by a functional
$f$, is called {\it an ordered vector space with a base}
\cite{Alf}. In what follows, we denote it as $(X,X_+,\ck,f)$. Let
$U$ be the convex hull of the set $\ck\cup(-\ck)$, and let
$$
\|x\|_{\ck}=\inf\{\l\in\br_+:\ x\in\l U\}.
$$
Then one can see that $\|\cdot\|_{\ck}$ is a seminorm on $X$.
Moreover, one has $\ck=\{x\in X_+: \ \|x\|_{\ck}=1\}$,
$f(x)=\|x\|_{\ck}$ for $x\in X_+$.  
Assume that the seminorm becomes a norm, and $X$ is complete w.r.t. this norm and $X_+$ is closed, then 
$(X,X_+,\ck,f)$ is called \textit{abstract state
space}. In this case, $\ck$ is a closed face of the unit ball of $X$,  and $U$ contains the open
unit ball of $X$. 
If the set $U$ is \textit{radially compact} \cite{Alf}, i.e.
$\ell\cap U$ is a closed and bounded segment for every line $\ell$
through the origin of $X$,  then  $\|\cdot\|_{\ck}$ is a
norm. The radially compactness is equivalent to $U$   coincides with the closed unit ball of $X$.  In this is case, 
we will call $X$ a \textit{strong abstract state
space}.  In the sequel, for the sake of simplicity instead of
$\|\cdot\|_{\ck}$ we will use usual notation $\|\cdot\|$.  We refer the reader to
\cite{Yo} where the difference between the strong ASS and  a more
general class of base norm spaces is discussed.

We recall some elementary definitions and
results. The positive cone $X_+$  of an ordered Banach space $X$  is said to be $\l$-generating if, given 
$x\in X$ , we can find $y,z\in X_+$ such that $x = y+ z$ and $\|y\| + \|z\|\leq\l \|x\|$. 
The norm on $X$ 
is  called \textit{regular} (respectively, \textit{strongly regular}) if, given $x$ in the open (respectively,
closed) unit ball of $X$, we can find $y$ in the closed unit ball with $y\geq  x$ and $y\geq -x$ . The
norm is said to be additive on $X_+$ if $\|x + y\| = \|x\| + \|y\|$ for all $ x, y\in X_+$.
We notice that if $X_+$ C is 1-generating, then one can show that $X$ is strongly regular. Similarly, if $X_+$ is
$\l$-generating for all $\l > 1$, then $X$ is regular \cite{Yo}.
The following results are well-known.

\begin{theorem}\cite{WN} Let  $X$ be an ordered Banach space with closed positive cone $X_+$.
The following statements are equivalent:
\begin{itemize}
\item[(i)] $X$ is an abstract state space;
\item[(ii)] $X$ is regular, and the norm is additive on $X_+$;
\item[(iii)] $X_+$ is $\l$-generating for all $\l > 1$, and the norm is additive on $X_+$.
\end{itemize}
\end{theorem}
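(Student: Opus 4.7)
The natural strategy is to establish the cyclic implications (i) $\Rightarrow$ (ii) $\Rightarrow$ (iii) $\Rightarrow$ (i). The first two implications are essentially structural unpackings of the definitions, while the bulk of the work goes into (iii) $\Rightarrow$ (i), where one must reconstruct the defining functional of the base from the norm data.

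For (i) $\Rightarrow$ (ii), the key observation is that on $X_+$ one has $\|x\| = f(x)$, so additivity of $f$ gives additivity of the norm. For regularity, since $U$ contains the open unit ball, any $x$ in the open unit ball satisfies $x \in \lambda U$ for some $\lambda < 1$, and thus admits a representation $x = \lambda(\alpha k_1 - (1-\alpha)k_2)$ with $k_1,k_2 \in \ck$ and $\alpha \in [0,1]$. The element $y := \lambda(\alpha k_1 + (1-\alpha)k_2)$ then satisfies $y \pm x \in X_+$ and $\|y\| = \lambda \le 1$.

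For (ii) $\Rightarrow$ (iii), pick $x$ in the open unit ball and use regularity to obtain $y$ with $y \ge \pm x$ and $\|y\|\le 1$. The identity $x = \tfrac{y+x}{2} - \tfrac{y-x}{2}$ exhibits $x$ as a difference of positive elements whose norms, by additivity on $X_+$, sum to $\|y\|\le 1$. A straightforward homogenization: applying the construction to $x/(\lambda\|x\|)$ for $\lambda>1$ and rescaling, yields a decomposition $x = y' - z'$ with $y',z'\in X_+$ and $\|y'\|+\|z'\| \le \lambda\|x\|$, establishing $\lambda$-generation for every $\lambda>1$.

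The main obstacle is (iii) $\Rightarrow$ (i). Here I would define $f(x) := \|x\|$ on $X_+$ and extend to $X$ by $f(y-z) := \|y\| - \|z\|$ whenever $x = y - z$ is a decomposition granted by $\lambda$-generation. Well-definedness is precisely where norm-additivity is indispensable: if $y_1 - z_1 = y_2 - z_2$, then $y_1 + z_2 = y_2 + z_1$ and additivity gives $\|y_1\| + \|z_2\| = \|y_2\| + \|z_1\|$, hence equal values. Linearity then follows from the same identity, while the bound $|f(x)| \le \|y\|+\|z\| \le \lambda\|x\|$ (valid for each $\lambda>1$) yields $\|f\|\le 1$, and strict positivity is automatic since $f(x) = \|x\|$ on $X_+$. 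Setting $\ck := \{x\in X_+ : f(x)=1\}$ and $U := \mathrm{conv}(\ck \cup (-\ck))$, the inequality $\|x\| \le \|x\|_\ck$ is immediate from the triangle inequality applied to the definition of $U$, and the $\lambda$-generation decomposition provides $x \in (\|y\|+\|z\|)\,U \subseteq \lambda\|x\|\,U$ for every $\lambda>1$, so $\|x\|_\ck \le \|x\|$. Hence the two norms coincide, and the hypotheses of (iii) supply the missing closedness of $X_+$ and completeness, concluding that $(X,X_+,\ck,f)$ is an abstract state space. The delicate point throughout is the interplay between norm-additivity and $\lambda$-generation, which is what makes the extension of $f$ unambiguous and ensures the two norms agree.
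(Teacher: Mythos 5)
Correct as far as it goes — but note that the paper does not prove this statement at all: it is quoted verbatim from \cite{WN} (Wong--Ng) as a known characterization, so there is no "paper proof" to compare against. Your cyclic argument (i) $\Rightarrow$ (ii) $\Rightarrow$ (iii) $\Rightarrow$ (i) is a sound self-contained reconstruction and matches the standard route in the literature: on $X_+$ the base functional satisfies $f(x)=\|x\|_{\ck}$, so additivity of the norm on the cone is just linearity of $f$; the decomposition $x=\tfrac{y+x}{2}-\tfrac{y-x}{2}$ is exactly the classical bridge between regularity and $\lambda$-generation; and in (iii) $\Rightarrow$ (i) the extension $f(y-z):=\|y\|-\|z\|$, made unambiguous by additivity on $X_+$, is the canonical way to recover the base $\ck=\{x\in X_+:\ f(x)=1\}$, after which the two-sided comparison $\|x\|\le\|x\|_{\ck}\le\lambda\|x\|$ for all $\lambda>1$ forces the norms to coincide, and closedness of $X_+$ plus completeness (inherited since the norms agree) give the abstract state space structure. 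Two cosmetic points you should tidy: in (i) $\Rightarrow$ (ii), the fact that $\|x\|<1$ yields $x\in\lambda U$ with $\lambda<1$ follows directly from the infimum defining $\|\cdot\|_{\ck}$, not from the weaker statement that $U$ contains the open unit ball; and when you write $x\in(\|y\|+\|z\|)U\subseteq\lambda\|x\|U$ you are implicitly using that $U$ is convex and contains $0$, so $sU\subseteq tU$ for $s\le t$ — worth saying, along with the trivial cases $y=0$, $z=0$ or $x=0$. Neither is a genuine gap.
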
 

\begin{theorem}\cite{Yo} Let  $X$ be an ordered Banach space with closed positive cone $X_+$.
The following statements are equivalent:
\begin{itemize}
\item[(i)] $X$ is a strong abstract state space;
\item[(ii)] $X$ is strongly regular, and the norm is additive on $X_+$;
\item[(iii)] $X_+$ is 1-generating and the norm is additive on $X_+$.
\end{itemize}
\end{theorem}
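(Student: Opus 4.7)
The plan is to prove the cyclic chain $(i)\Rightarrow(ii)\Rightarrow(iii)\Rightarrow(i)$. A preliminary observation that simplifies all three steps is that in an abstract state space the defining functional $f$ restricts to the norm on $X_+$, so norm additivity on $X_+$ is automatic whenever (i) holds; conversely, additivity on $X_+$ is exactly what will allow me to manufacture such an $f$ in the other direction.

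For $(i)\Rightarrow(ii)$, the hypothesis that $X$ is a \emph{strong} abstract state space gives $U=\mathrm{conv}(\ck\cup(-\ck))$ equal to the closed unit ball. Given $x$ in this ball, convexity of $\ck$ and regrouping positive and negative parts in a convex representation of $x$ let me write $x=\alpha p-\beta q$ with $p,q\in\ck$, $\alpha,\beta\geq 0$, $\alpha+\beta\leq 1$. Then $y:=\alpha p+\beta q\in X_+$ dominates both $\pm x$ and satisfies $\|y\|=\alpha+\beta\leq 1$, which is strong regularity.

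For $(ii)\Rightarrow(iii)$, taking $x$ with $\|x\|=1$, strong regularity supplies $y$ in the closed unit ball with $y\geq\pm x$; then $y_1:=(y+x)/2$ and $y_2:=(y-x)/2$ lie in $X_+$, $x=y_1-y_2$, and
$$
\|y_1\|+\|y_2\|=\|y_1+y_2\|=\|y\|\leq 1=\|x\|
$$
by additivity on $X_+$. A routine homogeneity argument extends this to arbitrary $x$, giving $1$-generation.

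For $(iii)\Rightarrow(i)$, which carries most of the content, I would build the base structure from scratch. On $X_+$ define $f(x):=\|x\|$, and extend to $X$ by $f(y-z):=\|y\|-\|z\|$ for $y,z\in X_+$ (a decomposition available by $1$-generation). Well-definedness is exactly where additivity is used: if $y-z=y'-z'$, then $y+z'=y'+z\in X_+$ forces $\|y\|+\|z'\|=\|y'\|+\|z\|$. Linearity and strict positivity of $f$ are then immediate, so $\ck:=\{x\in X_+:f(x)=1\}$ is a base. It remains to identify the Minkowski functional $\|\cdot\|_\ck$ of $U$ with the given norm, which then forces $U$ to equal the closed unit ball, hence radially compact. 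The estimate $\|x\|\leq\|x\|_\ck$ is immediate from the triangle inequality applied to any convex representation of $x$ in $\lambda U$. For the reverse, take $x\neq 0$ and apply $1$-generation to $x/\|x\|$: write $x/\|x\|=y-z$ with $y,z\in X_+$ and $\|y\|+\|z\|\leq 1$, set $p=y/\|y\|$, $q=z/\|z\|$ when nonzero, and observe that
$$
\frac{x}{\|x\|}=\|y\|\,p+\|z\|\,(-q)+(1-\|y\|-\|z\|)\cdot 0
$$
is a convex combination of $p,-q\in\ck\cup(-\ck)$ and $0\in U$ (since $0=\tfrac12 p+\tfrac12(-p)\in U$). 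Hence $x/\|x\|\in U$ and $\|x\|_\ck\leq\|x\|$.

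The main obstacle I expect is this final implication, where I must simultaneously manufacture the base functional $f$ in a consistent way (precisely where additivity on $X_+$ is indispensable) and prove that the Minkowski functional of the geometrically natural set $U$ agrees with the original Banach-space norm. The earlier two implications reduce to identifying the right algebraic bridge: the compressed representation $\alpha p-\beta q$ of points in $U$ for $(i)\Rightarrow(ii)$, and the symmetric splitting $(y\pm x)/2$ for $(ii)\Rightarrow(iii)$.
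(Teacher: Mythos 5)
Your proposal is correct, but note that the paper itself offers no proof of this statement: it is quoted as a known theorem of Yost \cite{Yo} (companion to the Wong--Ng result preceding it), so there is no internal argument to compare against. Your cyclic proof is a sound, self-contained reconstruction. The two easy legs are fine: for (i)$\Rightarrow$(ii) the regrouping of a convex combination in $U=\mathrm{conv}(\ck\cup(-\ck))$ into $\alpha p-\beta q$ and the choice $y=\alpha p+\beta q$ does give strong regularity, with additivity on $X_+$ coming from $f(x)=\|x\|_{\ck}$ on the cone; for (ii)$\Rightarrow$(iii) the symmetric splitting $(y\pm x)/2$ plus additivity is exactly the standard bridge (you also silently use the correct definition $x=y-z$ of $\lambda$-generation, repairing the paper's typo ``$x=y+z$''). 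The substantial leg (iii)$\Rightarrow$(i) is also correct: additivity makes $f(y-z)=\|y\|-\|z\|$ well defined and linear, strict positivity is clear, and your two inequalities identify $\|\cdot\|_{\ck}$ with the given norm. One phrasing caveat: equality of the Minkowski functional with the norm does not by itself ``force $U$ to equal the closed unit ball'' (the open ball has the same gauge); what does force it is the stronger fact your computation actually proves, namely that every $x$ with $\|x\|=1$ lies in $U$ itself, whence the closed ball is contained in $U$ by convexity with $0\in U$, while the triangle-inequality direction gives $U$ inside the ball --- so $U$ is the closed unit ball and hence radially compact, as required for a strong abstract state space. With that sentence repaired, the argument is complete; it is essentially the classical base-norm characterization argument one finds in Yost's and Wong--Ng's treatments rather than anything the paper supplies.
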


Let $A$ be a real ordered linear space and as before $ A_+$ denotes
the set of positive elements of $A$. An element $e\in A_+$ is called
\textit{order unit} if for every $a\in A$ there exists a number
$\l\in\br_+$ such, that $-\l e\leq a\leq\l e$. If the order is
Archimedean then the mapping $a\to\|a\|_e=\inf\{\l> 0\ : \-\l e\leq
a\leq\l e\}$  is a norm. If $A$ is a Banach space with respect to
this norm, the the pair $(A, e)$ is called \textit{an order-unit
space with the order unit $e$}.

Let us provide some examples of
ASS.

\begin{itemize}

\item[1.] Let $M$ be a von Neumann algebra. Let $M_{h,*}$ be the
Hermitian part of the predual space $M_*$ of $M$. As a base $\ck$ we
define the set of normal states of $M$. Then
$(M_{h,*},M_{*,+},\ck,\id)$ is a strong ASS, where $M_{*,+}$ is the set of
all positive functionals taken from $M_*$, and $\id$ is the unit in
$M$.

\item[2.] Let $(A,e)$ be an order-unit
space. An element $\rho\in A^*$ is called \textit{positive} if
$\rho(x)\geq 0$ for all $a\in A_+$. By $A^*_+$ we denote the set of
all positive functionals. A positive linear functional is called a
\textit{state} if $\rho(e)=1$. The set of all states is denoted by
$S(A)$. Then it is well-known \cite{Alf} that $(A^*,A^*_+,S(A),e)$
is a strong ASS.

\item[3.] Let $X$ be a Banach space over $\br$. Consider a new Banach space
$\tilde X=\br\oplus X$ with a norm $\|(\a,x)\|=\max\{|\a|,\|x\|\}$.
Define a cone $\tilde X_+=\{(\a,x)\ : \ \|x\|\leq \a, \
\a\in\br_+\}$ and a positive functional $f(\a,x)=\a$. Then one can
define a base $\ck=\{(\a,x)\in\tilde X:\ f(\a,x)=1\}$. Clearly, we
have $\ck=\{(1,x):\ \|x\|\leq 1\}$. Then $(\tilde X,\tilde
X_+,\tilde \ck,f)$ is an abstract state base-norm space \cite{Jam}. Moreover, $X$
can be isometrically embedded into $\tilde X$.  Using this construction one can study several interesting examples of ASS. In particularly, one considers as $X=\ell_p$, $1<p<\infty$. 

\item[4.]  Let $A$ be the disc algebra, i.e. the sup-normed space of complex-valued
functions which are continuous on the closed unit disc, and analytic on the open unit disc.
Let $X =\{f\in A :\ f(1)\in\br \}$. Then $X$ is a real Banach space with the following positive cone
$X_+=\{f\in X: f(1)=\|f\|\}=\{f\in X: \ f(1)\geq\|f\|\}$. The space $X$ is ASS, not strong ASS (see  \cite{Yo} for details).
\end{itemize}

Let $(X,X_+,\ck,f)$ be an ASS. A linear operator $T:X\to X$ is
called \textit{positive}, if $Tx\geq 0$ whenever $x\geq 0$. A positive linear
operator $T:X\to X$ is said to be {\it Markov}, if $T(\ck)\subset\ck$.
It is clear that $\|T\|=1$, and its adjoint mapping $T^*: X^*\to
X^*$ acts in ordered Banach space $X^*$ with unit $f$, and moreover,
one has $T^*f=f$. Note that in case of $X=\br^n$, $X_+=\br_+^n$ and
$\ck=\{(x_i)\in\br^n: \ x_i\geq 0, \ \sum_{i=1}^n x_i=1\}$, then for
any Markov operator $T$ acting on $\br^n$, the conjugate operator
$T^*$ can be identified with a usual stochastic matrix. Now for each
$y\in X$ we define a linear operator $T_y: X\to X$ by
$T_y(x)=f(x)y$.

The main object in this paper is a strongly continuous semigroup or \textit{$C_0$-semigroup} on a Banach space $ X $ is a map $ {\displaystyle T_t :\mathbb {R} _{+}\to L(X)}$ such that
\begin{itemize}
\item[(i)] ${\displaystyle T_0=I} $,   identity operator on ${\displaystyle X}$;
\item[(ii)] ${\displaystyle \forall t,s\geq 0:\ T_{t+s}=T_tT_s} $;
\item[(iii)] ${\displaystyle \forall x_{0}\in X:\ \|T_t x_{0}-x_{0}\|\to 0}$, as ${\displaystyle t\downarrow 0} $.
\end{itemize}
The first two axioms are algebraic, and state that ${\mathcal T} =
(T_t) $ is a representation of the semigroup ${\displaystyle
{(\mathbb {R} _{+},+)}} $; the last is topological, and states that
the map ${\mathcal T}$ is continuous in the strong operator
topology.

Let  $\mathcal T=(T_t)_{t\geq 0}$ be a $C_0$-semigroup. If for each
$t\in\br_+$ the operator $T_t$ is Markov, then  $\mathcal
T=(T_t)_{t\geq 0}$ is called a \textit{$C_0$-Markov semigroup}. An
element $x_0\in X$ is called a \textit{fixed point}  or \textit{stationary point} of  $\mathcal
T=(T_t)_{t\geq 0}$ if one has $T_tx_0=x_0$ for all $t\in\br_+$. In
\cite{BR} a basic structure of positive semigroups defined on
ordered Banach spaces is given. We refer the reader to \cite{CLMP,CRS,WGCE}, for more concrete examples of Markov semigroups associated with certain physical systems. 

We denote
$$
\displaystyle A_t (\mathcal T)=\frac{1}{t} \int_{0}^{t} T_s ds, \ \
t\in\br_+.
$$
We notice that the integral above is taken with respect to the strong operator topology. 

\begin{definition} A $\mathcal T=(T_t)_{t\geq 0}$ be a $C_0$-Markov semigroup defined on $X$ is called
\begin{enumerate}
\item[(i)] {\it uniform asymptotically stable} if there exist an
element $x_0\in\ck$ such that
$$
\lim_{t\to\infty}\|T_t - T_{x_0}\|=0;
$$
\item[(ii)] {\it uniform mean ergodic} if there exist an element
$x_0\in \ck$ such that
$$
\lim_{t\to\infty} \bigg\|A_t(\mathcal T)-T_{x_0}\bigg\|=0;
$$
\item[(iii)] \textit{weakly ergodic} if one has
$$
\lim_{t\to\infty}\sup_{x,y\in \ck}\|T_t x-T_t y\|=0;
$$
\item[(iv)] {\it weakly mean ergodic} if one has
$$
\lim_{t\to\infty} \sup_{x,y\in \ck} \|A_t(\mathcal T)x - A_t(\mathcal T)y\|=0.
$$
\end{enumerate}
\end{definition}

\begin{remark}\label{dd0} We notice that uniform asymptotical
stability implies uniform mean ergodicity. Moreover, if $\mathcal T$
is uniform mean ergodic, then $x_0$ corresponding to $T_{x_0}$, is
a fixed point of $\mathcal T$. Indeed,  $T_tT_{x_0}=T_{x_0}$, which
yields $T_tx_0=x_0$ for all $t\in\br_+$. We stress that every
uniformly mean ergodic Markov operator has a unique fixed point.
\end{remark}

Let $(X,X_+,\ck,f)$ be an ASS  and $T:X\to X$ be a Markov operator.
Letting
\begin{equation}
\label{NN} N=\{x\in X: \ f(x)=0\},
\end{equation}
we define
\begin{equation}
\label{db} \d(T)=\sup_{x\in N,\ x\neq 0}\frac{\|Tx\|}{\|x\|}.
\end{equation}

The quantity $\d(T)$ is called \textit{Dobrushin's ergodicity
coefficient} of $T$ (see \cite{M0}).

\begin{remark} We note that if $X^*$ is a commutative algebra, the notion of
the Dobrushin ergodicity coefficient was studied in
\cite{C},\cite{D}. In a non-commutative setting, i.e. when $X^*$
is a von Neumann algebra, such a notion was introduced in
\cite{M}. We should stress that such a coefficient has been
independently defined in \cite{GQ}. Furthermore, for particular
cases, i.e. in a non-commutative setting, such a coefficient
explicitly has been calculated for quantum channels (i.e.
completely positive maps). Moreover, its physical interpratations and applications to concrete quantum physical systems
can be found in \cite{CLMP,SN,WGCE}. In these settings, $X^*$ is taken as certain $C^*$-algebra associated with the system. 
\end{remark}

The next result establishes several properties of Dobrushin
ergodicity coefficient.

\begin{theorem}\label{Dob}
Let $(X, X_+ , \ck ,f)$ be an abstract state space and $T,S : X\to X$ be Markov
operators. The following statements hold:
\begin{itemize}
\item[(i)] $0 \leq \d (T) \leq 1$; \item[(ii)] $|\d (T) - \d (S)|
\leq \d (T-S) \leq \left\| T - S\right\|$; \item[(iii)] $\d (TS)
\leq \d (T) \d (S)$; \item[(iv)] if $H: X\to X$ is a linear bounded
operator such that $H^* (f) =0$, then $\left\| TH \right\| \leq \d
(T) \left\| H \right\|$; \item[(v)] 
If  $X_+$ is $\l$-generating, then one has
\begin{equation}\label{5Dob}
\displaystyle \d (T) \leq  \frac{\l}{2} \sup_{u,v \in \ck} \left\| Tu -
Tv\right\|;
\end{equation}
\item[(vi)] if $\d (T)=0$, then there exists $y_0 \in X_+$ such
that $T=T_{y_0}$.
\end{itemize}
\end{theorem}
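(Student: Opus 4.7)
The plan is to handle the six claims in order; items (i)--(iv) are essentially formal consequences of the defining properties together with the Markov identity $T^*f=f$, which forces the hyperplane $N$ to be invariant under every Markov operator.

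For (i), the lower bound is immediate from the definition, while $\d(T)\le\|T\|=1$ uses the already noted fact that Markov operators have norm one. For (ii), the inequality $\d(T-S)\le\|T-S\|$ is just the restriction of the operator-norm definition to $N$, and the first inequality follows by applying the triangle bound $\|Tx\|\le\|(T-S)x\|+\|Sx\|$ on $N$, dividing by $\|x\|$, taking suprema, and symmetrizing in $T,S$. For (iii), since $S^*f=f$ we have $Sx\in N$ whenever $x\in N$; chaining the definition of $\d$ along $x\mapsto Sx\mapsto TSx$ gives the submultiplicativity. For (iv), the hypothesis $H^*f=0$ says precisely that $H(X)\subseteq N$, hence $\|THx\|\le\d(T)\|Hx\|\le\d(T)\|H\|\|x\|$.

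The main work, and the principal obstacle, is (v), where one must bridge $N$ and $\ck$ via the generating-cone structure. Given a nonzero $x\in N$, the $\l$-generating property supplies $y,z\in X_+$ with $x=y-z$ and $\|y\|+\|z\|\le\l\|x\|$. Since $f(x)=0$, we get $\a:=f(y)=f(z)$, and the additivity of the norm on $X_+$ gives $\|y\|=\|z\|=\a$, so $2\a\le\l\|x\|$. Setting $u=y/\a$, $v=z/\a\in\ck$ (the case $\a=0$ forcing $x=0$ by strict positivity of $f$ on $X_+$), one rewrites $Tx=\a(Tu-Tv)$ and estimates
\[
\|Tx\|\le\a\sup_{u,v\in\ck}\|Tu-Tv\|\le\frac{\l\|x\|}{2}\sup_{u,v\in\ck}\|Tu-Tv\|.
\]
Dividing by $\|x\|$ and taking the supremum over $x\in N\setminus\{0\}$ then yields \eqref{5Dob}.

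For (vi), $\d(T)=0$ means $Tx=0$ for every $x\in N$. Fix any $u_0\in\ck$ and set $y_0:=Tu_0\in\ck\subseteq X_+$. For any $u\in\ck$ one has $u-u_0\in N$, so $Tu=y_0$. Extending to $X_+$ by positive homogeneity (using $w=f(w)\cdot w/f(w)$ with $w/f(w)\in\ck$ whenever $w\ne 0$) and then to all of $X$ by linearity through $X=X_+-X_+$ gives $Tx=f(x)y_0=T_{y_0}x$ for every $x\in X$, as required.
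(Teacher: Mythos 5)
Your proof is correct and, for part (v) --- the only item the paper proves itself --- it follows exactly the paper's argument: use the $\l$-generating property to write $x=x_+-x_-$ with $\|x_+\|+\|x_-\|\leq\l\|x\|$, deduce $f(x_+)=f(x_-)=\|x_\pm\|\leq\frac{\l}{2}\|x\|$ from $f(x)=0$, normalize into $\ck$, and estimate $\|Tx\|$. The paper simply cites \cite{M0} for (i)--(iv) and (vi), whereas you supply the standard direct arguments (invariance of $N$ under Markov operators, triangle inequality, range of $H$ in $N$, constancy of $T$ on $\ck$); these are fine, and you even cover the degenerate case $f(x_+)=0$ that the paper's write-up of (v) passes over silently.
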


\begin{proof} The statements (i)-(iv), (vi) have been proved in \cite{M0}. We only need to prove (v). Let $x\in N$. Then one can find
$x_+,x_-\in X_+$ such that $x=x_+-x_-$ with $\|x_+\|+\|x_-\|\leq\l\|x\|$. The equality $f(x)=0$ implies $f(x_+)=f(x_-)$ which means $\|x_+\|=\|x_-\|$. Hence, we have $\xi:=\|x_+\|=\|x_-\|\leq\frac{\l}{2}\|x\|$, so by denoting 
$$
u=\frac{1}{\|x_+\|}x_+, \ \ \ \ v=\frac{1}{\|x_-\|}x_-
$$
one finds $x=\xi(u-v)$. Consequently, one gets
\begin{eqnarray*}
\frac{\|Tx\|}{\|x\|}=\frac{\xi\|T(u-v)\|}{\|x\|}\leq \frac{\l}{2}\|Tu-Tv\|,
\end{eqnarray*} 
this together with \eqref{db} yields the required assertion.
\end{proof}

\begin{remark}\label{sASS} We point out that if $X$ is a strong abstract state space (i.e. $X_+$ is 1-generating), then an analogous fact as Theorem \ref{Dob} (v) has been proved in \cite{M0}, and in this case \eqref{5Dob} reduces to 
\begin{equation}\label{51Dob}
\d (T) = \frac{1}{2} \sup_{u,v \in \ck} \left\| Tu -
Tv\right\|.
\end{equation}
\end{remark}

\section{Uniform asymptotic stability Perturbation Bounds of $C_0$-Markov semigroups}

In this section, we prove uniform asymptotic stability of
$C_0$-Markov semigroups in terms of the Dobrushin's ergodicity
coefficient. This allows us to establish perturbation bounds for the
semigroups using the ergodicity coefficient. The following result is
an continuous time analogue of Theorem 3.5 \cite{M0}. For the sake
of completeness we will prove it.

\begin{theorem}\label{Dob1}
Let $(X, X_+ , \ck ,f)$ be an abstract state space and $\mathcal T =(T_t)_{t\geq 0}$
be a $C_0$-Markov semigroup on $X$. Then the following statements are
equivalent:
\begin{itemize}
\item[(i)] $\mathcal T$ is weakly ergodic; \item[(ii)] there exists $t_0 \in\br$ such that  $ \d (T_{t_0}) < 1$;
\item[(iii)] $\mathcal T$ is uniformly asymptotical stable.
 Moreover,
there are positive constants $C, \a, t_0 \in\br$ and $x_0 \in\ck$
such that
\begin{equation}\label{Dobrushin}
\left\| T_t - T_{x_0} \right\| \leq C e^{-\a t}, \,\,\,\,\, \forall
t\geq t_0.
\end{equation}
\end{itemize}
\end{theorem}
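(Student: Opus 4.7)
I will prove the cyclic chain (iii) $\Rightarrow$ (i) $\Rightarrow$ (ii) $\Rightarrow$ (iii), pushing the exponential estimate out of the last step. The first arrow is immediate: if $\|T_t-T_{x_0}\|\to 0$, then for $u,v\in\ck$ the identity $T_tu-T_tv=(T_t-T_{x_0})(u-v)$ (using $T_{x_0}w=f(w)x_0$ and $f|_{\ck}=1$) gives $\|T_tu-T_tv\|\le 2\|T_t-T_{x_0}\|\to 0$, uniformly in $u,v$. For (i) $\Rightarrow$ (ii), Theorem 2.1 tells us that $X_+$ is $\lambda$-generating for every $\lambda>1$, so Theorem~\ref{Dob}(v) applies and yields $\d(T_t)\le \tfrac{\lambda}{2}\sup_{u,v\in\ck}\|T_tu-T_tv\|\to 0$; hence $\d(T_{t_0})<1$ for some $t_0>0$.

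The substantive step is (ii) $\Rightarrow$ (iii). The plan is to propagate the strict inequality $\d(T_{t_0})<1$ to exponential decay of $\d(T_t)$ via the submultiplicativity of Theorem~\ref{Dob}(iii). Writing an arbitrary $t\ge 0$ as $t=nt_0+r$ with $0\le r<t_0$, the semigroup law gives $T_t=T_{t_0}^{\,n}T_r$, so
\begin{equation*}
\d(T_t)\le \d(T_{t_0})^{n}\d(T_r)\le \d(T_{t_0})^{n}.
\end{equation*}
Setting $\a=-t_0^{-1}\log\d(T_{t_0})>0$ converts this into $\d(T_t)\le e^{\a t_0}e^{-\a t}$ for all $t\ge 0$. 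In particular $\d(T_t)\to 0$, which already recovers (i) and (ii) from (iii), closing the equivalences.

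It remains to construct $x_0$ and control $\|T_t-T_{x_0}\|$. For any $u\in\ck$ and $s,t\ge 0$, $T_su-u\in N$ by (2.1) with $\|T_su-u\|\le 2$, so Theorem~\ref{Dob} and the bound on $\d(T_t)$ give
\begin{equation*}
\|T_{t+s}u-T_tu\|=\|T_t(T_su-u)\|\le 2\d(T_t)\to 0,
\end{equation*}
hence $(T_tu)_t$ is Cauchy and converges to some $x_0\in\ck$ (closedness of $\ck$). Applying the same estimate to $u-v$ for $u,v\in\ck$ shows $x_0$ is independent of $u$, and strong continuity of $T_t$ together with $T_{t+s}u\to x_0$ forces $T_tx_0=x_0$. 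For the norm bound, given $x\in X$ with $\|x\|\le 1$ I use the $\lambda$-generating property (Theorem 2.1 again) to write $x=y-z$ with $y,z\in X_+$, $\|y\|+\|z\|\le\lambda$; since $x_0$ is fixed and $f(y)=\|y\|$, one has $(T_t-T_{x_0})y=\|y\|T_t(\hat y-x_0)$ with $\hat y=y/\|y\|\in\ck$ and $\hat y-x_0\in N$, $\|\hat y-x_0\|\le 2$, yielding $\|(T_t-T_{x_0})y\|\le 2\|y\|\d(T_t)$, and analogously for $z$. Summing gives $\|T_t-T_{x_0}\|\le 2\lambda\,\d(T_t)$ for every $\lambda>1$, and combining with the exponential estimate on $\d(T_t)$ produces \eqref{Dobrushin} with $C=2e^{\a t_0}$ (after fixing any $\lambda>1$). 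The main obstacle is this last passage from the base $\ck$ to the whole unit ball — in previous papers it was handled via radial compactness, whereas here it is the $\lambda$-generating characterization of abstract state spaces in Theorem 2.1 that makes the argument go through without that hypothesis.
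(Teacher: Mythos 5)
Your proof is correct, and in the key implication (ii)$\Rightarrow$(iii) it takes a somewhat different route from the paper. Both arguments start the same way, using submultiplicativity (Theorem~\ref{Dob}(iii)) to get $\d(T_t)\le \d(T_{t_0})^{[t/t_0]}\to 0$; but the paper then proves that the net $(T_t)$ is Cauchy in operator norm directly, via Theorem~\ref{Dob}(iv) applied to $H=T_r-T_{r+s}$ (which satisfies $H^*f=0$), obtains an abstract limit operator $Q$, and identifies $Q=T_{y_0}$ by showing $\d(Q)=0$ and invoking Theorem~\ref{Dob}(vi). You instead work pointwise on the base: you show the orbits $T_tu$, $u\in\ck$, form a Cauchy net converging to a common fixed point $x_0\in\ck$, and then extend the estimate from $\ck$ to the whole unit ball by the $\lambda$-generating decomposition $x=y-z$, yielding $\|T_t-T_{x_0}\|\le 2\lambda\,\d(T_t)$. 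This buys you the explicit exponential bound \eqref{Dobrushin} with a concrete constant, which the paper asserts but does not spell out (there it would follow by letting $s\to\infty$ in the estimate $\|T_t-T_{t+s}\|\le 2\rho^{[t/t_0]}$), at the cost of redoing by hand what Theorem~\ref{Dob}(iv),(vi) package abstractly. Three cosmetic points: your constant should be $C=2\lambda e^{\a t_0}$ for the chosen $\lambda>1$, not $2e^{\a t_0}$ (harmless, since only existence of $C$ is claimed); the definition $\a=-t_0^{-1}\log\d(T_{t_0})$ needs the trivial remark that the case $\d(T_{t_0})=0$ is handled separately (then $\d(T_t)=0$ for $t\ge t_0$ and any $\a$ works); and the identity $T_tx_0=x_0$ uses only norm continuity of the single operator $T_t$, not strong continuity of the semigroup.
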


\begin{proof}
The implication  (i)$ \Rightarrow$ (ii) immediately follows from Theorem \ref{Dob}(v),  and (iii) $\Rightarrow$ (i)
is obvious. Therefore, it is enough to establish the implication
(ii)$ \Rightarrow$ (iii). Let  $t_0 \in \mathbb
R_+$ such that $\delta (T_{t_0})<1$. We put $\rho:=\delta (T_{t_0})$. Clearly,  $0<\rho<1$.  Since for every $t \in
\mathbb R$, $T_t$ is a Markov operator, from Theorem \ref{Dob} (iii)
we obtain
\begin{equation}\label{Dob11}
\delta (T_t) = \delta (T_{[t/t_0] t_0 + r}) \leq \delta
(T_{t_0})^{[t/t_0]} \leq \rho^{[t/t_0]} \to 0, \ \ \ \text{as} \ \ \
t\to \infty,
\end{equation}
where $[a]$ denotes the integer part of a number $a$ and $0\leq r <
t_0$.

Now let us show that $\mathcal T$ is a Cauchy net with respect to
the norm. Indeed, thanks to  Theorem \ref{Dob} (iv) and \eqref{Dob11}
one has
\begin{eqnarray}
\left\| T_t - T_{t+s} \right\| &=& \left\| T_{[t/t_0] t_0 + r} - T_{([t/t_0] t_0 + r) +s} \right\| \nonumber \\
&\leq& \delta (T_{t_0})^{[t/t_0]} \left\| T_r - T_{r + s}
\right\|\nonumber\\[2mm]
&\leq& 2\delta (T_{t_0})^{[t/t_0]}\to 0 \ \ \ \text{as} \ \ \ t\to
\infty.
\end{eqnarray}

Therefore, there is an operator $Q$ such that $\left\| T_t - Q
\right\| \to 0$. One can show that $Q$ is a Markov operator. To show
$Q = T_{x_0}$ for some $x_0 \in X_+$, it is enough to establish
$\delta (Q) =0$. Indeed, from
$$
|\delta (T_t) - \delta(Q) | \leq \left\| T_t - Q \right\| \ \
\text{and} \ \ \lim_{t\to\infty} \delta (T_t) =0,
$$
we infer that $\delta (Q) =0$. This completes the proof.
\end{proof}

\begin{remark}
From this theorem we immediately conclude that if at least one operator $T_{t_0}$
(for some $t_0\in\br_+$) of a $C_0$-Markov semigroup
$\mathcal T =(T_t)_{t\geq 0}$ is uniformly asymptotical stable,
then whole semigroup $\mathcal T$ is uniform asymptotically stable. We point out that in \cite{M0,M01} the space $X$ was strong ASS and the semigroup is taken to be discrete $\{T^n\}$. 
\end{remark}

\begin{remark}
In the classical setting, i.e. if $X=L^1(E,\mu)$, then a similar kind of result has been
established in \cite{Mit1}. 
\end{remark}

To establish perturbation bounds for $C_0$-semigroups, we need some
auxiliary well-known facts (see \cite{EN} for the proofs).

\begin{theorem}\label{gg1}
 Let $(A,D(A))$ be the generator of a $C_0$-semigroup $\mathcal T =(T_t)_{t\geq 0}$ on a Banach space $X$
satisfying
$$
\left\| T_t \right\|    \leq M e^{wt} \ \ \forall  t \geq 0
$$
and some $w \in \br, M \geq 1$. If $B \in L(X)$, then $C := A + B$
with $D(C) := D(A)$ generates a $C_0$-semigroup $\mathcal S =
(S_t)_{t\geq 0}$
 satisfying
$$
\left\| S_t\right\| \leq  M e^{(w+M \left\|B\right\|)t} \ \ \forall
t \geq 0.
$$
Moreover,  for a representation formula for this new semigroup, it
satisfies an integral equation.
$$
\displaystyle
 S_t x = T_t x + \int_0^t T_{t - s} B S_s xds
$$
holds for every $t \geq 0 $ and $x \in X$.
\end{theorem}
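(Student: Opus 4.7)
The plan is to construct $(S_t)$ explicitly as a Dyson-type series built from $(T_t)$ and $B$, read off the growth bound and integral equation directly from the construction, verify the semigroup and strong continuity properties, and finally identify the generator as $A+B$ on $D(A)$. The main obstacle will be the last step, since showing that no new domain is introduced by the perturbation requires an additional resolvent (or symmetry) argument.

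First, I would define (with all integrals taken in the strong operator sense)
$$
S_t^{(0)}x := T_t x, \qquad S_t^{(n+1)}x := \int_0^t T_{t-s}\,B\,S_s^{(n)}x\,ds, \quad n\geq 0,
$$
and prove by induction that $\|S_t^{(n)}\|\leq M^{n+1}\|B\|^n t^n e^{wt}/n!$, using $\|T_{t-s}\|\leq Me^{w(t-s)}$ in the inductive step. Setting $S_t := \sum_{n\geq 0} S_t^{(n)}$ gives absolute convergence in $L(X)$, uniform on bounded intervals, with
$$
\|S_t\|\leq Me^{wt}\sum_{n\geq 0}\frac{(M\|B\|t)^n}{n!}=Me^{(w+M\|B\|)t},
$$
which is exactly the claimed estimate.

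Next, summing the recursion for $S^{(n+1)}$ and interchanging the sum and the integral (justified by the uniform convergence on compacts) yields the integral equation
$$
S_t x = T_t x + \int_0^t T_{t-s}B\,S_s x\,ds, \quad t\geq 0,\ x\in X.
$$
Strong continuity at $0$ follows because $S_t^{(0)}=T_t\to I$ strongly while $\sum_{n\geq 1}S_t^{(n)}\to 0$ in norm as $t\downarrow 0$ by the bound above. For the semigroup law $S_{t+s}=S_tS_s$, with $s$ fixed both $t\mapsto S_{t+s}x$ and $t\mapsto S_tS_sx$ satisfy the Volterra equation $W_t = T_tS_sx+\int_0^t T_{t-r}BW_r\,dr$; a Gronwall-type estimate shows that this equation has a unique continuous solution, so the two agree.

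The remaining and most delicate step is the identification of the generator. For $x\in D(A)$, the integral equation together with strong continuity of $s\mapsto T_{t-s}BS_sx$ implies
$$
\lim_{t\downarrow 0}\frac{S_t x - x}{t}= \lim_{t\downarrow 0}\frac{T_t x - x}{t}+\lim_{t\downarrow 0}\frac{1}{t}\int_0^t T_{t-s}B S_s x\,ds = Ax + Bx,
$$
so the generator $C$ of $(S_t)$ extends $A+B$ on $D(A)$. To rule out a strictly larger domain, I would compare Laplace transforms: for $\lambda>w+M\|B\|$, termwise integration of the Dyson series yields
$$
\int_0^\infty e^{-\lambda t}S_t x\,dt = (\lambda - A)^{-1}\sum_{k\geq 0}\bigl(B(\lambda-A)^{-1}\bigr)^k x = (\lambda - A - B)^{-1}x,
$$
so the resolvent of $C$ coincides with that of $A+B$ on all of $X$. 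By the one-to-one correspondence between $C_0$-semigroups and their generators, $D(C)=D(A)$ and $C=A+B$, which finishes the proof.
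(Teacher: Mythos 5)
Your proof is correct, and all the key estimates check out: the inductive bound $\|S_t^{(n)}\|\leq M^{n+1}\|B\|^n t^n e^{wt}/n!$, the resulting growth bound $Me^{(w+M\|B\|)t}$, the Volterra/Gronwall uniqueness argument for the semigroup law, and the Laplace-transform identification of the resolvent (valid since $\|B(\lambda-A)^{-1}\|\leq M\|B\|/(\lambda-w)<1$ for $\lambda>w+M\|B\|$) which pins down $D(C)=D(A)$. Note, however, that the paper does not prove this statement at all: it quotes it as a well-known auxiliary fact and refers to Engel--Nagel \cite{EN} for the proof. Your argument is essentially the standard Dyson--Phillips series construction found there, i.e.\ you build $(S_t)$ directly as $\sum_n S_t^{(n)}$ and read off the bound and the integral equation from the series; the primary proof in \cite{EN} instead verifies the Hille--Yosida generation estimates for $A+B$ via the Neumann series $R(\lambda,A)\sum_k\bigl(BR(\lambda,A)\bigr)^k$ for the resolvent and then obtains the variation-of-constants formula afterwards. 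Your route has the advantage of producing the semigroup and the integral equation (which is what the paper actually uses in Theorem \ref{per1}) explicitly and self-containedly, at the cost of the extra work you correctly flag in identifying the generator; the resolvent route gets the generator statement more directly but must then derive the integral equation separately. Either way, the proposal is complete and sound.
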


Out first result about perturbation bounds is the following result.

\begin{theorem} \label{per1}
Let $(X, X_+, \ck, f)$ be an abstract state space and
$\mathcal S=(S_t)_{t\geq 0}$, $\mathcal T=(T_t)_{t\geq 0}$ be
$C_0$-Markov semigroups on $X$ with generators $A$ and $C $,
respectively, satisfying $B:= C - A$ is bounded. If there exists
$\rho \in [0,1)$ and $t_0 \in \br$ such that $\d (T_ {t_0}) \leq
\rho$, then one has
\begin{eqnarray} \label{1a}
\displaystyle
&& \left\| T_t x- S_t z \right\| \leq \\
&& \begin{cases} \left\| x - z \right\| + t \left\| B \right\|,
&\forall t \leq t_0,\\ \displaystyle
 \rho^{[t/t_0]} \left\| x- z \right\| + \Big( \frac{t_0 (1 - \rho^{[t/t_0]})}{1-\rho} + \rho^{[t/t_0]} (t - t_0 [t/ t_0])\Big) \left\| B\right\|, &
\forall t > t_0
\end{cases} \nonumber
\end{eqnarray}
for every $x,z\in \ck$.
\end{theorem}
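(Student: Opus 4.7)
My plan is to apply the variation-of-parameters identity from Theorem \ref{gg1}, which, since $C = A + B$ with $B$ bounded, yields
\[
S_t z \,=\, T_t z + \int_0^t T_{t-s}\, B\, S_s z\,ds.
\]
Subtracting $T_t x$ from both sides and taking norms,
\[
\|T_t x - S_t z\| \,\leq\, \|T_t(x-z)\| + \int_0^t \|T_{t-s}\, B\, S_s z\|\,ds,
\]
so the problem reduces to estimating the two pieces on the right via the Dobrushin coefficient.

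For the first piece, $x,z \in \ck$ forces $f(x-z)=0$, so $x-z \in N$ and \eqref{db} gives $\|T_t(x-z)\| \leq \delta(T_t)\|x-z\|$. For the integrand I would first check that $B$ maps into $N$. Since both $\mathcal T$ and $\mathcal S$ are Markov, one has $T_t^* f = f = S_t^* f$; differentiating $f(T_t x)=f(x)$ on the common domain $D(A)=D(C)$ at $t=0$ yields $f(Ax)=0$ and likewise $f(Cx)=0$, hence $f(Bx)=0$ on $D(A)$, and density together with boundedness of $B$ extends this to $B^*f=0$. Consequently $B S_s z \in N$ with $\|B S_s z\| \leq \|B\|$ (using $\|S_s z\|=1$), and applying \eqref{db} to $T_{t-s}$ gives $\|T_{t-s} B S_s z\| \leq \delta(T_{t-s})\|B\|$. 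Together,
\[
\|T_t x - S_t z\| \,\leq\, \delta(T_t)\|x-z\| + \|B\|\int_0^t \delta(T_u)\,du.
\]

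As in the proof of Theorem \ref{Dob1}, the submultiplicativity of $\delta$ from Theorem \ref{Dob}(iii) combined with the hypothesis $\delta(T_{t_0}) \leq \rho$ yields $\delta(T_u) \leq \rho^{[u/t_0]}$ for every $u \geq 0$ (with $\rho^0=1$). For $t \leq t_0$ this just says $\delta(T_u) \leq 1$, and the inequality collapses to the first case of \eqref{1a}. For $t > t_0$, I would split $\int_0^t \delta(T_u)\,du$ into the windows $[kt_0,(k+1)t_0)$ for $k = 0,1,\ldots,[t/t_0]-1$ together with the final partial window $[[t/t_0]t_0, t]$; on the $k$-th window the integrand is at most $\rho^k$ and on the remainder at most $\rho^{[t/t_0]}$, so the geometric sum $\sum_{k=0}^{[t/t_0]-1} \rho^k = (1-\rho^{[t/t_0]})/(1-\rho)$ reproduces exactly the bracketed coefficient in \eqref{1a}.

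The only delicate step is the identification $B^*f=0$: this is what converts boundedness of $B$ into the crucial fact that $B$ sends $X$ into the mean-zero subspace $N$ on which the Dobrushin coefficient genuinely contracts. Once that is in hand, everything else is elementary bookkeeping with a geometric series.
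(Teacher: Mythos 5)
Your proof is correct and follows essentially the same route as the paper: the variation-of-parameters identity of Theorem \ref{gg1}, the intermediate bound $\left\| T_t x - S_t z\right\| \leq \delta(T_t)\left\|x-z\right\| + \left\|B\right\|\int_0^t \delta(T_u)\,du$, and the window-by-window estimate of the integral using submultiplicativity of $\delta$ together with $\delta(T_{t_0})\leq\rho$. The only (welcome) difference is that you verify explicitly that $B^*f=0$, a step the paper subsumes under ``Markovianity of $T_t$, $S_t$ and Theorem \ref{Dob}(iv)''.
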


\begin{proof} From Theorem \ref{gg1}, for each $t\in \br$,  we have
\begin{equation} \label{2}
S_t x= T_t x+ \int_0^t T_{t - s} B S_s xds.
\end{equation}
Let $x,z\in \ck$, it then follows from \eqref{2} that
\begin{eqnarray*}
T_t x - S_t z &=& T_t x - T_t z - \int_0^t T_{t - s} B S_s zds \\
&=& T_t  (x - z) - \int_0^t T_{t-s} B S_s z ds \\
&& \text{using change of variable for t-s= s, then we have} \\
&=& T_t  (x - z) - \int_0^t T_s B S_{t-s} z ds \\
&=& T_t  (x - z) - \int_0^t T_{s} B z_s ds
\end{eqnarray*}
where $z_s:=S_{t-s} z$. Hence,
\begin{equation}\label{per11}
\left\| T_t x  - S_t z \right\| \leq \left\| T_t  (x - z) \right\| +
\int_0^t \left\|  T_s  B (z_s) \right\|.
\end{equation}

The Markovianity of $T_t$, $S_t$ and Theorem \ref{Dob} (iv)
imply that
$$
 \left\|  T_s \circ B (z_s) \right\| \leq \d(T_s) \left\| B\right\|,
 \ \ \ \
\left\| T_t (x - z) \right\| \leq \d (T_t) \left\| x -z\right\|.
$$
Hence, from \eqref{per11} we obtain
\begin{eqnarray} \label{3}
\displaystyle \left\| T_t x  - S_t z \right\| \leq \d (T_t) \left\|x
- z\right\| + \left\| B \right\| \int_{0}^{t} \d (T_s) ds.
\end{eqnarray}

Moreover if $t > t_0$ and $t / t_0$ is not an integer, then
$\delta (T_t) < 1$ implies that $\delta (T_t ) < \delta (T_{t_0} )^{
[t/t_0]} < \rho^{[t/t_0]}$.  So by Riemann integral,  we obtain
\begin{eqnarray} \label{4}
\displaystyle \int_{0}^{t} \d (T_s) ds &\leq& t_0 (1+ \d (T_{t_0})  + \d (T_{t_0})^2  + \cdots + \d (T_{t_0})^{[t/t_0] -1} ) + \d (T_{t_0})^{[t/t_0]} (t - t_0 [t/ t_0])\nonumber \\
&=& \frac{t_0 (1- \d (T_{t_0})^{[t/t_0] -1})}{1- \d (T_{t_0})} +  \d
(T_{t_0})^{[t/t_0]} (t - t_0 [t/ t_0]), \, \, \forall t > t_0 .
\end{eqnarray}

Hence, the last inequality with \eqref{3} yields the required
assertion. \end{proof}

\begin{remark}
Note that this result extends all existing results (see
\cite{Mit2,Mit3,SW}) for general spaces. In particularly, if we take
as $X$ a predual of any von Neumann algebra, then we extend the
result of \cite{SW} (where it was proved similar result for matrix
algebras) for arbitrary von Neumann algebras.
\end{remark}

\begin{corollary} \label{per2}
Let the conditions of Theorem \ref{per1} be satisfied. Then for
every $x,y\in\ck$ one has
\begin{equation} \label{5}
\displaystyle \sup_{t \geq 0} \left\| T_t x - S_t z \right\| \leq
\left\| x - z \right\| + \frac{t_0}{1- \rho} \left\| B\right\|.
\end{equation}
In addition, if $\mathcal S$ is uniformly asymptotical stable to
$S_{z_0}$ then
\begin{equation} \label{6}
\displaystyle \left\| T_{x_0} - S_{z_0} \right\| \leq \frac{t_0}{1-
\rho} \left\| B\right\|.
\end{equation}
\end{corollary}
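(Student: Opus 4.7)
The plan is to derive \eqref{5} directly from the piecewise bound \eqref{1a} of Theorem \ref{per1} by dominating the right-hand side by a $t$-independent quantity, and then to obtain \eqref{6} by specializing $z=x$ and passing to the limit.

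For \eqref{5} I would split on whether $t \leq t_0$ or $t > t_0$. In the former regime, the first branch of \eqref{1a} gives $\|T_t x - S_t z\| \leq \|x - z\| + t\|B\| \leq \|x - z\| + t_0\|B\|$, and $\rho \in [0,1)$ yields $1/(1-\rho) \geq 1$, which closes this case. In the latter regime, the coefficient of $\|x-z\|$ in \eqref{1a} is $\rho^{[t/t_0]} \leq 1$, while the coefficient of $\|B\|$ has the form
$$\frac{t_0\bigl(1 - \rho^{[t/t_0]}\bigr)}{1-\rho} + \rho^{[t/t_0]}\bigl(t - t_0[t/t_0]\bigr).$$
Using $t - t_0[t/t_0] < t_0$ to replace the second summand by $\rho^{[t/t_0]}t_0$ and combining over a common denominator, the total is at most $\frac{t_0(1 - \rho^{[t/t_0]+1})}{1-\rho} \leq \frac{t_0}{1-\rho}$. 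Taking the supremum over $t \geq 0$ then yields \eqref{5}.

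For \eqref{6} I would fix any $x \in \ck$ and set $z = x$ in \eqref{5}, obtaining $\|T_t x - S_t x\| \leq \frac{t_0}{1-\rho}\|B\|$ uniformly in $t$. The hypothesis $\d(T_{t_0}) \leq \rho < 1$ together with Theorem \ref{Dob1} forces $\mathcal T$ to be uniformly asymptotically stable with limit $T_{x_0}$, so $T_t x \to x_0$ in norm; by the additional hypothesis on $\mathcal S$, $S_t x \to z_0$. Passing to the limit gives $\|x_0 - z_0\| \leq \frac{t_0}{1-\rho}\|B\|$. To convert this into the desired operator-norm bound I would use the identity $(T_{x_0} - S_{z_0})(w) = f(w)(x_0 - z_0)$, valid for every $w \in X$, which gives $\|T_{x_0} - S_{z_0}\| = \|f\| \cdot \|x_0 - z_0\|$. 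The estimate $\|f\| \leq 1$ is the only mildly subtle ingredient: every $w$ with $\|w\|<1$ lies in $U=\mathrm{conv}(\ck \cup (-\ck))$, so $f(w)$ is a convex combination of values in $\{1,-1\}$ and hence $|f(w)|\leq 1$; closure under $\|w\|\to 1$ finishes this.

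The main obstacle, such as it is, lies in the case-split and algebraic simplification needed to extract the uniform constant $t_0/(1-\rho)$ from \eqref{1a}; once this is in hand, both assertions follow as essentially automatic consequences of Theorems \ref{per1} and \ref{Dob1}.
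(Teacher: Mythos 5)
Your proposal is correct and follows essentially the same route as the paper: the bound \eqref{5} is exactly the paper's ``direct consequence of \eqref{1a}'' with the case-split and algebra spelled out, and for \eqref{6} the paper likewise passes to the limit $t\to\infty$ (in the intermediate estimate \eqref{3} combined with \eqref{4}, rather than in \eqref{5} with $z=x$), arriving at the same constant $\frac{t_0}{1-\rho}\|B\|$. Your explicit justification that $\left\|T_{x_0}-S_{z_0}\right\|\leq\|x_0-z_0\|$ via $\|f\|\leq 1$ merely fills in a step the paper leaves implicit, so there is no substantive difference in approach.
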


\begin{proof}
The inequality \eqref{5} is a direct consequence of \eqref{1a}. Now
taking the limit as $t\to \infty$ in \eqref{3} we immediately find
$$
\left\| T_{x_0} - S_{z_0}\right\|\leq \left\| B \right\|
\int_{0}^{\infty} \d (T_s) ds.
$$
From \eqref{4} it follows that
$$
\left\| T_{x_0} - S_{z_0} \right\| \leq \left\| B \right\|
\frac{t_0}{1-\rho} .
$$
This completes the proof.
\end{proof}

The following theorem gives an alternative method of obtaining
perturbation bounds in terms of $\d (T_{t_0})$.

\begin{theorem} \label{per4}
Let the conditions of Theorem \ref{per1} be satisfied.  Then for
every $x,z\in\ck$ one has

\begin{eqnarray} \label{12}
\left\| T_t x - S_t z \right\| &\leq & \d (T_{t_0})^{\lfloor t/t_0
\rfloor} (\left\| x - z\right\| + \sup_{0< t< t_0} \left\| T_t -
S_t \right\|) \\[2mm]
&&+ \frac{1 - \d (T_{t_0})^{\lfloor t/t_0\rfloor}}{1 - \d (T_{t_0})}
\left\| T_{t_0} - S_{t_0}\right\|, \,\,\,\, t \in \br_+ \nonumber.
\end{eqnarray}
\end{theorem}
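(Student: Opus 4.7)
The plan is to split $t$ via Euclidean division $t = nt_0 + r$ with $n = \lfloor t/t_0\rfloor$ and $0\le r < t_0$, so that by the semigroup property $T_t = T_{t_0}^n T_r$ and $S_t = S_{t_0}^n S_r$, and then telescope $T_{t_0}^n - S_{t_0}^n$. The key algebraic decomposition I would use is
\begin{equation*}
T_t x - S_t z = T_{t_0}^n\bigl(T_r x - S_r z\bigr) + \bigl(T_{t_0}^n - S_{t_0}^n\bigr) S_r z.
\end{equation*}

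For the first summand, I would note that $T_r x,\, S_r z \in \ck$ by Markovianity, hence $T_r x - S_r z \in N$. Iterating Theorem \ref{Dob}(iii)--(iv) then gives $\|T_{t_0}^n(T_r x - S_r z)\| \le \d(T_{t_0})^n \|T_r x - S_r z\|$, and the triangle inequality combined with $\d(T_r)\le 1$ yields $\|T_r x - S_r z\| \le \|x-z\| + \sup_{0<s<t_0}\|T_s - S_s\|$ (the case $r = 0$ being vacuous since $T_0 = S_0 = I$).

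For the second summand I would use the standard telescoping identity
\begin{equation*}
T_{t_0}^n - S_{t_0}^n = \sum_{k=0}^{n-1} T_{t_0}^{n-1-k}(T_{t_0} - S_{t_0}) S_{t_0}^{k}.
\end{equation*}
For each $k$ one has $S_{t_0}^k S_r z \in \ck$, while Markovianity of both semigroups gives $(T_{t_0} - S_{t_0})^* f = 0$; applying Theorem \ref{Dob}(iv) with $T = T_{t_0}^{n-1-k}$ and $H = T_{t_0} - S_{t_0}$, together with Theorem \ref{Dob}(iii), yields $\|T_{t_0}^{n-1-k}(T_{t_0} - S_{t_0})\| \le \d(T_{t_0})^{n-1-k}\|T_{t_0} - S_{t_0}\|$. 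Evaluating on $S_{t_0}^k S_r z$ (which has norm $1$) and summing the geometric series $\sum_{k=0}^{n-1}\d(T_{t_0})^{n-1-k} = (1-\d(T_{t_0})^n)/(1-\d(T_{t_0}))$ produces exactly the second term of the claimed bound.

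The main obstacle is choosing the right decomposition: the ``obvious'' split $T_t x - S_t z = T_t(x-z) + (T_t - S_t)z$ only pulls a factor $\d(T_{t_0})^n$ out of one piece, so the sup-term over $(0,t_0)$ would appear multiplied by $1$ rather than $\d(T_{t_0})^{\lfloor t/t_0\rfloor}$. Routing the split first through $S_r z$ and then through the multiplicative telescoping for $T_{t_0}^n - S_{t_0}^n$ ensures that every residual vector lies in the functional kernel $N$, so the ergodicity coefficient can be extracted the full $n$ times and the sharper prefactor is obtained.
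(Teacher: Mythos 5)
Your proposal is correct and is essentially the paper's argument: the paper peels off one factor of $T_{t_0}$ at a time via $T_t x - S_t z = T_{t_0}\bigl(T_{t-t_0}x - S_{t-t_0}z\bigr) + (T_{t_0}-S_{t_0})S_{t-t_0}z$ and iterates $\lfloor t/t_0\rfloor$ times, which is exactly the unrolled form of your telescoping identity for $T_{t_0}^n - S_{t_0}^n$, with the same use of Theorem \ref{Dob}(iii)--(iv) and the same geometric series. The only cosmetic difference is that you state the closed-form decomposition at once, while the paper writes it as a recursion whose base case ($t<t_0$) matches your treatment of the residual time $r$.
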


\begin{proof} If $t < t_0$ then by the inequality
\begin{eqnarray*}
\left\| T_t x - S_t z \right\| &\leq& \left\| T_t x - S_t x \right\| + \left\| S_t (x-z)\right\| \\
&\leq& \left\| x-z \right\| + \left\| T_t - S_t \right\|,
\end{eqnarray*}
we have \eqref{12}.

If $t\geq t_0 $, we obtain
\begin{eqnarray*}
T_t x - S_t z &=& T_{t_0} (T_{t - t_0} x) - S_{t_0} (S_{t-t_0} z) \\
&=& T_{t_0} (T_{t-t_0} x - S_{t-t_0} z) + (T_{t_0} - S_{t_0})
S_{t-t_0} z.
\end{eqnarray*}
Therefore,
\begin{eqnarray*}
\left\| T_t x - S_t z \right\| \leq \left\| T_{t-t_0} x - S_{t-t_0}
z\right\| \d (T_{t_0}) + \left\| T_{t_0} - S_{t_0} \right\|.
\end{eqnarray*}

If we continue to apply this relation to
$$\left\| T_{t-t_0} x - S_{t-t_0} z\right\|, \cdots,  \left\| T_{t- t_0(\lfloor t/t_0\rfloor - 1)} x - S_{t- t_0(\lfloor t/t_0\rfloor - 1)} z \right\|
$$
we obtain
\begin{eqnarray*} \label{13}
\left\| T_t x - S_t z \right\| &\leq& \d (T_{t_0})^{\lfloor t/t_0
\rfloor} (\left\| x - z\right\| +  \sup_{0< t< t_0} \left\| T_t -
S_t \right\|)
\nonumber\\[2mm]
 && + \bigg(\d (T_{t_0})^{\lfloor t/t_0 \rfloor - 1} + \d
(T_{t_0})^{\lfloor t/t_0 \rfloor - 2} + \cdots + 1\bigg) \left\| T_{t_0} - S_{t_0} \right\| \nonumber, \\
&= & \d (T_{t_0})^{\lfloor t/t_0 \rfloor} (\left\| x - z\right\| +
\max_{0< t< t_0} \left\| T_t - S_t \right\|)\\[2mm]
&& + \frac{ 1- \d (T_{t_0})^{\lfloor t/t_0 \rfloor}}{1-\d (T_{t_0})}
\left\| T_{t_0} - S_{t_0} \right\|.
\end{eqnarray*}
\end{proof}

\begin{theorem} \label{per6}
If $\d (T_{t_0}) < 1$ for some $t_0\in\br_+$, then every
$C_0$-Markov semigroup $\mathcal S= (S_t)_{t\geq 0}$ satisfying
$\|S_{t_0} -T_{t_0}\|<1-\d (T_{t_0})$ is uniformly asymptotical
stable and has a unique fixed point $z_0\in\ck$ such that
\begin{eqnarray} \label{per62}
\|x_0-z_0\|\leq \frac{\|S_{t_0}-T_{t_0}\|}{1-\d
(T_{t_0})-\|S_{t_0}-T_{t_0}\|}
\end{eqnarray}
\end{theorem}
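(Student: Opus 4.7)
The plan is to first establish uniform asymptotic stability of $\mathcal{S}$ via Theorem \ref{Dob1}, then derive the perturbation estimate by exploiting the fixed point equations with a carefully chosen algebraic decomposition.

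For the first part, I would use Theorem \ref{Dob}(ii), which yields
\[
\delta(S_{t_0}) \leq \delta(T_{t_0}) + \|S_{t_0} - T_{t_0}\| < \delta(T_{t_0}) + (1 - \delta(T_{t_0})) = 1.
\]
Hence $\delta(S_{t_0}) < 1$, so by Theorem \ref{Dob1} the semigroup $\mathcal{S}$ is uniformly asymptotically stable. Its limit operator has the form $S_{z_0}$ for some $z_0 \in \ck$, and by Remark \ref{dd0} this $z_0$ is a unique fixed point of $\mathcal{S}$; in particular $S_{t_0} z_0 = z_0$.

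For the bound, I would use the fixed point relations $T_{t_0} x_0 = x_0$ and $S_{t_0} z_0 = z_0$ together with the decomposition
\[
x_0 - z_0 = T_{t_0} x_0 - S_{t_0} z_0 = S_{t_0}(x_0 - z_0) + (T_{t_0} - S_{t_0}) x_0.
\]
The crucial observation is that $f(x_0 - z_0) = 1 - 1 = 0$, so $x_0 - z_0 \in N$, and therefore $\|S_{t_0}(x_0 - z_0)\| \leq \delta(S_{t_0}) \|x_0 - z_0\|$ by the definition of the Dobrushin coefficient. Since $x_0 \in \ck$ gives $\|x_0\| = 1$, the remaining term is bounded by $\|T_{t_0} - S_{t_0}\|$, yielding
\[
\|x_0 - z_0\| \leq \delta(S_{t_0}) \|x_0 - z_0\| + \|T_{t_0} - S_{t_0}\|.
\]
Substituting the estimate $\delta(S_{t_0}) \leq \delta(T_{t_0}) + \|S_{t_0} - T_{t_0}\|$ from the first step and solving for $\|x_0 - z_0\|$ (the assumption $\|S_{t_0}-T_{t_0}\| < 1-\delta(T_{t_0})$ ensures the denominator is positive) produces precisely \eqref{per62}.

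There is no real obstacle to speak of here: the only nontrivial choice is the particular decomposition — one must apply $S_{t_0}$ rather than $T_{t_0}$ to $x_0 - z_0$, so that the coefficient that appears on the right is $\delta(S_{t_0})$ (bounded via Theorem \ref{Dob}(ii)) rather than $\delta(T_{t_0})$ alone. The symmetric decomposition $T_{t_0}(x_0 - z_0) + (T_{t_0}-S_{t_0})z_0$ would instead give the weaker denominator $1 - \delta(T_{t_0})$, so the asymmetry in the decomposition is what produces the sharper form stated in the theorem.
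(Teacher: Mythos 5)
Your proof is correct, and it reverses the logical order of the paper's argument in a way worth noting. The paper first constructs the fixed point $z_0$ of $S_{t_0}$ by hand: for $x\in N$ it estimates $\|S_{t_0}x\|\leq(\d(T_{t_0})+\|S_{t_0}-T_{t_0}\|)\|x\|=\rho\|x\|$, deduces that $I-S_{t_0}$ is invertible on $N$ via the Neumann series, solves $(I-S_{t_0})(z_0-x_0)=-(I-S_{t_0})x_0$, and only at the very end invokes $\d(S_{t_0})<1$ and Theorem \ref{Dob1} to get uniform asymptotic stability of $\mathcal S$. You instead get $\d(S_{t_0})\leq\d(T_{t_0})+\|S_{t_0}-T_{t_0}\|<1$ immediately from Theorem \ref{Dob}(ii), conclude stability from Theorem \ref{Dob1}, and read off existence and uniqueness of $z_0\in\ck$ from the limit operator (Remark \ref{dd0}); this is shorter and has the advantage of delivering $z_0\in\ck$ directly, whereas the paper's Neumann-series construction only visibly gives $f(z_0)=1$ and leaves positivity implicit. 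For the quantitative bound the two arguments coincide: the paper's identity $z_0-x_0=T_{t_0}(z_0-x_0)+(S_{t_0}-T_{t_0})(z_0-x_0)+(S_{t_0}-T_{t_0})x_0$ is exactly your $x_0-z_0=S_{t_0}(x_0-z_0)+(T_{t_0}-S_{t_0})x_0$ after regrouping, and both estimate the contraction factor on $N$ by $\d(T_{t_0})+\|S_{t_0}-T_{t_0}\|$.

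One remark in your closing paragraph is backwards: the decomposition $x_0-z_0=T_{t_0}(x_0-z_0)+(T_{t_0}-S_{t_0})z_0$ gives $\|x_0-z_0\|\leq\|T_{t_0}-S_{t_0}\|/(1-\d(T_{t_0}))$, whose denominator $1-\d(T_{t_0})$ is \emph{larger} than $1-\d(T_{t_0})-\|S_{t_0}-T_{t_0}\|$, so that variant actually yields a \emph{sharper} bound than \eqref{per62}, not a weaker one. This does not affect the validity of your proof of the stated inequality, but the claimed motivation for preferring the $S_{t_0}$-decomposition is mistaken.
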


\begin{proof}  Take any $x\in
N$, then we have
\begin{equation}\label{per61}
\|S_{t_0}x\|\leq\|S_{t_0}-T_{t_0} x\|+\|T_{t_0}x\|\leq \rho\|x\|.
\end{equation}
where $\rho=\|S_{t_0}-T_{t_0}\|+\d(T_{t_0})<1$. Hence by
\eqref{per61} one gets $\|S_{t_0}^{n}x\|\leq \rho^n\|x\|$ for all
$n\in\bn$. Hence $(I - S_{t_0})$ is invertible on  $N$.

It is clear that the equation $S_{t_0} z_0=z_0$ with $z_0\in\ck$
such that $(I-S_{t_0})(z_0-x_0)=-(I-S_{t_0})x_0$. Since
$(I-S_{t_0})x_0$ is an element of $ N$, we conclude that
$$
z_0=x_0-(I-S_{t_0})^{-1}((I-S_{t_0})x_0)
$$
is unique. Moreover from the identity
$$
z_0-x_0=T_{t_0}(z_0-x_0)+(S_{t_0}-T_{t_0})(z_0-x_0)+(S_{t_0}-T_{t_0})x_0
$$
and
$$
\|z_0-x_0\|\leq
\big(\d(T_{t_0})+\|S_{t_0}-T_{t_0}\|\big)\|z_0-x_0\|+ \| (S_{t_0} -
T_{t_0}) x_0 \|
$$
we obtain \eqref{per62}. 

For every $t\in\br_+$,  one has $S_{t_0}(S_t z_0)=S_t(S_{t_0}
z_0)=S_t z_0$, and the uniqueness of $z_0$ for $S_{t_0}$ we infer
that $S_t z_0=z_0$. Now assume that $\mathcal S$ has another fixed
point $\tilde z_0\in\ck$. Then $S_{t_0}\tilde z_0=\tilde z_0$ which
yields $\tilde z_0=z_0$. Moreover, since  $\d(S_{t_0})<1$, which by
Theorem \ref{Dob1} yields that $S$ is uniform asymptotically
stable as well. This completes the proof.
\end{proof}

The proved theorem yields that if at certain time $t_0$ for given a
uniform asymptotically stable semigroup $(T_t)$ one can find a
Markov semigroup $(S_t)$ such that  operators $T_{t_0}$ and
$S_{t_0}$ are very close, then $(S_t)$ is also uniform
asymptotically stable.\\

Let us turn our attention to bounds on an error of estimation of
the averages $A_t(\mathcal{T})$. From Theorems \ref{Dob1} and \ref{per1}
we infer the following result.

\begin{theorem}
Let $(X, X_+, \ck, f)$ be an abstract state space and
$\mathcal T=(T_t)_{t\geq 0}$ be $C_0$-Markov semigroup on $X$. If
there exists  $t_0 \in \bn$ such that $\d
(T_{t_0}) <1$, then for every $x\in\ck$ one has
\begin{equation*}
\left\| A_t(\mathcal{T})x -x_0\right\| \leq\bigg(\frac{t_0 (1- \d
(T_{t_0})^{[t/t_0] -1})}{t(1- \d (T_{t_0}))} +  \d
(T_{t_0})^{[t/t_0]} \frac{t - t_0 [t/ t_0]}{t}\bigg)\|x-x_0\|.
\end{equation*}
\end{theorem}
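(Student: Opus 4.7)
The plan is to reduce the problem to the Dobrushin coefficient bound for elements in $N = \{x \in X : f(x) = 0\}$, and then reuse the integral estimate that already appeared in the proof of Theorem \ref{per1}.

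First I would invoke Theorem \ref{Dob1}: the hypothesis $\d(T_{t_0}) < 1$ guarantees that $\mathcal{T}$ is uniformly asymptotically stable, so there exists a unique fixed point $x_0 \in \ck$ with $T_t x_0 = x_0$ for every $t \geq 0$. In particular $A_t(\mathcal{T}) x_0 = x_0$, and therefore
\begin{equation*}
A_t(\mathcal{T}) x - x_0 \;=\; A_t(\mathcal{T})(x - x_0) \;=\; \frac{1}{t}\int_0^t T_s(x - x_0)\, ds.
\end{equation*}

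Next, since $x, x_0 \in \ck$, one has $f(x - x_0) = 1 - 1 = 0$, so $x - x_0 \in N$. By the very definition \eqref{db} of Dobrushin's ergodicity coefficient, this gives $\|T_s(x - x_0)\| \leq \d(T_s)\,\|x - x_0\|$ for every $s \geq 0$. Plugging this into the above identity and taking norms yields
\begin{equation*}
\|A_t(\mathcal{T})x - x_0\| \;\leq\; \frac{\|x - x_0\|}{t}\int_0^t \d(T_s)\, ds.
\end{equation*}

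Finally, I would recycle the estimate \eqref{4} derived in the proof of Theorem \ref{per1}: using the submultiplicativity $\d(T_s) \leq \d(T_{t_0})^{[s/t_0]}$ from Theorem \ref{Dob}(iii) and a direct Riemann-integral computation,
\begin{equation*}
\int_0^t \d(T_s)\, ds \;\leq\; \frac{t_0\bigl(1 - \d(T_{t_0})^{[t/t_0]-1}\bigr)}{1 - \d(T_{t_0})} \;+\; \d(T_{t_0})^{[t/t_0]}\bigl(t - t_0[t/t_0]\bigr).
\end{equation*}
Dividing by $t$ and combining gives precisely the claimed bound. There is essentially no obstacle here, since both halves of the argument (the passage to $N$ and the Riemann estimate) are already in place earlier in the paper; the only care needed is in verifying that $x - x_0 \in N$, which justifies applying $\d(\cdot)$ rather than the larger operator norm.
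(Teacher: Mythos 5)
Your proposal is correct and follows essentially the same route as the paper: both arguments use the existence of the fixed point $x_0$ (via Theorem \ref{Dob1}), pass to $\frac{1}{t}\int_0^t\|T_s x - T_s x_0\|\,ds\leq\frac{\|x-x_0\|}{t}\int_0^t\d(T_s)\,ds$ because $x-x_0\in N$, and then apply the estimate \eqref{4} from the proof of Theorem \ref{per1}. No gaps beyond those already present in the paper's own version.
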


\begin{proof} The condition of the theorem yields that the semigroup
is uniform asymptotically stable, i.e. for any $x\in K$ one has
$T_tx\to x_0$ as $t\to\infty$. Hence, we obtain
\begin{eqnarray*}
\left\| A_t(\mathcal{T})x
-x_0\right\|&\leq&\frac{1}{t}\int_{0}^t\|x_0-T_sx\|ds\\[2mm]
&=&\frac{1}{t}\int_{0}^t\|T_sx_0-T_sx\|ds\\[2mm]
&\leq&\frac{\|x-x_0\|}{t}\int_{0}^t\d(T_s)ds
\end{eqnarray*}
So, applying \eqref{4} to the last expression, one finds the
required assertion.
\end{proof}

\begin{theorem} Let the conditions of Theorem \ref{per1} be satisfied. Then for every $x,z\in\ck$ one has
\begin{eqnarray*}
\left\|A_t(\mathcal{T})x- A_t(\mathcal{S})z \right\|\leq
\bigg(\frac{t_0 (1- \d (T_{t_0})^{[t/t_0] -1})}{1- \d (T_{t_0})} +
\d(T_{t_0})^{[t/t_0]} (t - t_0 [t/
t_0])\bigg)\bigg(\|B\|+\frac{\|x-z\|}{t}\bigg).
\end{eqnarray*}
\end{theorem}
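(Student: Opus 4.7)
The plan is to reduce the bound on the averages to the pointwise bound already developed in the proof of Theorem \ref{per1}. Specifically, the proof of Theorem \ref{per1} produced the intermediate estimate
\begin{equation*}
\|T_s x - S_s z\| \leq \d(T_s)\|x-z\| + \|B\|\int_0^s \d(T_r)\,dr,
\end{equation*}
valid for every $s\geq 0$ and every $x,z\in\ck$ (this is inequality \eqref{3} in that proof, before the case split into $t\leq t_0$ versus $t>t_0$). I would start by recording this fact and using it as a black box.

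Next I would apply the triangle inequality to $A_t(\mathcal T)x-A_t(\mathcal S)z=\frac{1}{t}\int_0^t(T_sx-S_sz)\,ds$, the integral being taken in the strong operator topology, to obtain
\begin{equation*}
\|A_t(\mathcal T)x-A_t(\mathcal S)z\|\leq\frac{\|x-z\|}{t}\int_0^t\d(T_s)\,ds+\frac{\|B\|}{t}\int_0^t\!\!\int_0^s\d(T_r)\,dr\,ds.
\end{equation*}
The first summand is handled directly by the estimate \eqref{4} from the proof of Theorem \ref{per1}, which bounds $\int_0^t\d(T_s)\,ds$ by the quantity
$\Phi(t):=\tfrac{t_0(1-\d(T_{t_0})^{[t/t_0]-1})}{1-\d(T_{t_0})}+\d(T_{t_0})^{[t/t_0]}(t-t_0[t/t_0])$, giving the contribution $\Phi(t)\|x-z\|/t$.

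For the double integral, I would use monotonicity: since $\d(T_r)\geq 0$, the inner integral $\int_0^s\d(T_r)\,dr$ is bounded above by $\int_0^t\d(T_r)\,dr\leq\Phi(t)$ for every $s\in[0,t]$, so the outer integral is at most $t\Phi(t)$ and the second summand is at most $\|B\|\Phi(t)$. Adding the two contributions yields exactly the claimed inequality. The calculation is entirely mechanical once \eqref{3} and \eqref{4} are in hand; the only step that requires a bit of care is the monotone majorization of the inner integral, which is the place where one might be tempted to do a more refined estimate (e.g.\ integration by parts giving $\int_0^t\int_0^s\d(T_r)\,dr\,ds=\int_0^t(t-r)\d(T_r)\,dr$), but the cruder bound $t\Phi(t)$ is what produces the clean factor $\|B\|$ in the statement. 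I expect no genuine obstacle beyond keeping the constants straight.
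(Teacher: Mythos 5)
Your proposal is correct and follows essentially the same route as the paper's own proof: triangle inequality on $\frac1t\int_0^t(T_sx-S_sz)\,ds$, the intermediate estimate \eqref{3}, the crude majorization $\int_0^s\d(T_r)\,dr\leq\int_0^t\d(T_r)\,dr$ for the double integral, and finally \eqref{4}. Nothing further is needed.
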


\begin{proof} By means of \eqref{3} we find
\begin{eqnarray*}
\left\|A_t(\mathcal{T})x- A_t(\mathcal{S})z \right\|&\leq&\frac{1}{t}\int_{0}^t\|T_ux-S_uz\|du\\[2mm]
&=&\frac{1}{t}\int_{0}^t\|T_sx_0-T_s(x)\|ds\\[2mm]
&\leq&\frac{\|x-z\|}{t}\int_{0}^t\d(T_s)ds+\frac{\|B\|}{t}\int_{0}^t\bigg(\int_{0}^s\d(T_u)du\bigg)ds\\[2mm]
&\leq&\frac{\|x-z\|}{t}\int_{0}^t\d(T_s)ds+\|B\|\int_{0}^t\d(T_u)du\\[2mm]
&=&\bigg(\frac{\|x-z\|}{t}+\|B\|\bigg)\int_{0}^t\d(T_u)du
\end{eqnarray*}
So, applying \eqref{4} to the last expression, one finds the
required assertion.
\end{proof}

The proved results in this section demonstrate a direct link between the rate of convergence to stationarity and stability to
perturbations which is an essential principle in the theory of perturbations. These are universal results holding for many 
interesting Markov processes even in classical and non-commutative settings.


\section{Uniform mean ergodicity of Cesaro averages of $C_0$-Markov semigroups}

In this section, we are going to establish an analogue of Theorem
\ref{Dob1} for uniform mean ergodic Markov semigroups, and provide
its application.

Before we prove Theorem \ref{UME}, we need the following auxiliary result. 

\begin{lemma}\label{UMELem}
Let $X$ be a Banach space and let $\mathcal T$ be a bounded linear $C_0$-semigroup on $X$ with spectral radius 1. If the Cesaro averages of $\mathcal T$ converge to zero with respect to the operator norm as $n\to\infty$, then 0 is contained in the resolvent set of $A$ where $A$ is the generator of $\mathcal T$. 
\end{lemma}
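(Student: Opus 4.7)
The plan is to deduce from the operator-norm convergence $A_t(\mathcal T)\to 0$ that the generator $A$ is a bijection from $D(A)$ onto $X$; the closed graph theorem will then yield a bounded $A^{-1}\in L(X)$, which is exactly the statement $0\in\rho(A)$.

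First I would dispose of injectivity. If $x\in D(A)$ satisfies $Ax=0$, then $T_t x = x$ for every $t\geq 0$ by standard $C_0$-semigroup theory, hence $A_t(\mathcal T)x = x$ for every $t > 0$. Since operator-norm convergence of $A_t(\mathcal T)$ to $0$ implies in particular strong convergence, letting $t\to\infty$ forces $x=0$, so $\operatorname{Ker}(A) = \{0\}$.

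The substantive step is surjectivity of $A$. For this I would invoke the uniform mean ergodic theorem for bounded $C_0$-semigroups (see, e.g., \cite{EN}): a bounded $C_0$-semigroup is uniformly mean ergodic if and only if $\operatorname{Range}(A)$ is closed, in which case $A_t(\mathcal T)$ converges in operator norm to the projection $P$ onto $\operatorname{Ker}(A)$ along $\operatorname{Range}(A)$, and $X = \operatorname{Ker}(A) \oplus \operatorname{Range}(A)$. In the situation at hand $P = 0$, and together with the injectivity already established this forces $\operatorname{Range}(A) = X$.

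Combining the two steps, $A$ is a closed (as a generator) bijection from $D(A)$ onto $X$, so the closed graph theorem delivers a bounded $A^{-1}: X\to X$ and hence $0 \in \rho(A)$, completing the argument. The main obstacle is really the invocation of the uniform mean ergodic theorem; a self-contained alternative would pick $t_0$ with $\|A_{t_0}(\mathcal T)\| < 1$ and exploit the operator identity $A\cdot (t_0 A_{t_0}(\mathcal T)) = T_{t_0} - I$ together with a Neumann-series--type argument to derive closedness of $\operatorname{Range}(A)$ directly, but I would prefer to quote the classical result and avoid reproving it here.
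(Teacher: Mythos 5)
Your argument is correct, but it follows a genuinely different route from the paper. The paper argues by contradiction inside the spectrum of the generator: if $0\in\sigma(A)$, then (since the semigroup is bounded, $\sigma(A)$ lies in the closed left half-plane, so $0$ is a boundary spectral point) $0$ is an approximate eigenvalue of $A$; normalized approximate eigenvectors $x_n$ satisfy $\|T_tx_n-x_n\|\le Mt\|Ax_n\|\to 0$ via $T_tx_n-x_n=\int_0^t T_sAx_n\,ds$, hence $1$ is an approximate eigenvalue of each $A_t(\mathcal T)$, which keeps $\|A_t(\mathcal T)\|\ge 1$ and contradicts the norm convergence to $0$. You instead prove bijectivity of $A$: injectivity directly from $A_t(\mathcal T)x=x\to 0$, surjectivity by quoting the classical uniform mean ergodic theorem for bounded $C_0$-semigroups (closed range of $A$, $X=\ker A\oplus\mathrm{rg}\,A$, limit equal to the projection onto $\ker A$), and then the closed graph theorem. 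What each buys: the paper's proof is short, elementary and self-contained apart from the standard fact that boundary points of the spectrum are approximate eigenvalues; your proof imports a substantially stronger classical theorem, of which the lemma is then essentially a corollary (indeed the equivalent formulation ``$0\in\rho(A)$ or $0$ is a first-order pole of the resolvent'' yields the conclusion immediately, since a limit projection equal to $0$ rules out a pole), so it is cleaner to state but heavier to justify; be aware that the short course \cite{EN} may not contain the uniform mean ergodic theorem in the form you use (it is Lin-type uniform ergodicity, found in the full Engel--Nagel treatise), so the citation should be made precise, or else you should carry out the self-contained alternative you sketch, where the relevant identity is $(I-A_{t_0}(\mathcal T))x=-A\bigl(\tfrac{1}{t_0}\int_0^{t_0}\!\!\int_0^s T_u x\,du\,ds\bigr)$, which together with invertibility of $I-A_{t_0}(\mathcal T)$ for $\|A_{t_0}(\mathcal T)\|<1$ gives surjectivity of $A$ outright.
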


\begin{proof}
Assume that zero is the spectral value of $A$. Then 0 is the approximate eigenvalue of $A$ and we can find a corresponding approximate eigenvectors $(x_n)_{n\in\mathbb N}$. Then by the equation
$$
\displaystyle 
T_t x_n - x_n = \int_0^t T_s A x_n ds \ \ \ \forall t >0
$$
we have $\left\| T_t x_n - x_n \right\| \leq t \left\| A x_n \right\| $ and
$$
\displaystyle
\lim_{n\to\infty} \left\| T_t x_n - x_n \right\|  =0.
$$

For the Cesaro averages of $\mathcal T$, 1 becomes the approximate eigenvalues of $A_t (\mathcal T)$ with approximate eigenvectors $(x_n)_{n\in\mathbb N}$. It is the contradiction by assumption that $A_t (\mathcal T)$ converge to zero with respect to the operator norm.
\end{proof}

  \begin{theorem}\label{UME}
Let  $(X, X_+, \ck, f)$ be a strong abstract state space and $\mathcal T =(T_t)_{t\geq 0}$
be a  $C_0$-Markov semigroup on $X$. Then the following statements are
equivalent:
\begin{itemize}
\item[(i)] $\mathcal T$ is weakly mean ergodic;

\item[(ii)] There exists   $t_0 \in\br_+$ such
that $\d (A_{t_0}(T))<1$;

\item[(iii)] $\mathcal T$ is uniformly mean ergodic.
\end{itemize}
\end{theorem}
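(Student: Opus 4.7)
The plan is to close the cycle (iii) $\Rightarrow$ (i) $\Rightarrow$ (ii) $\Rightarrow$ (iii). The first implication is immediate from $\sup_{u,v\in\ck}\|A_t(\mathcal T)u-A_t(\mathcal T)v\|\leq 2\|A_t(\mathcal T)-T_{x_0}\|$. For (i) $\Rightarrow$ (ii), the hypothesis that $X$ is a \emph{strong} ASS enters through Remark~\ref{sASS}: one has the equality $\delta(A_t(\mathcal T))=\tfrac{1}{2}\sup_{u,v\in\ck}\|A_t(\mathcal T)u-A_t(\mathcal T)v\|$, so weak mean ergodicity forces $\delta(A_t(\mathcal T))\to 0$ and in particular $\delta(A_{t_0}(\mathcal T))<1$ for some $t_0\in\br_+$.

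The substantive direction is (ii) $\Rightarrow$ (iii). Set $Q:=A_{t_0}(\mathcal T)$, which is a Markov operator with $\rho:=\delta(Q)<1$. I would first invoke the discrete analogue (Theorem 3.5 of \cite{M0}) applied to the single operator $Q$: its iterates $Q^n$ converge in operator norm to $T_{x_0}$ for some $x_0\in\ck$. Since $T_t$ commutes with $Q$, the element $T_tx_0\in\ck$ is again a fixed point of $Q$; passing to the limit in $Q^n(T_tx_0)\to T_{x_0}(T_tx_0)=x_0$ then forces $T_tx_0=x_0$ for every $t\geq 0$. Using this one checks the identity $A_t(\mathcal T)-T_{x_0}=A_t(\mathcal T)(I-T_{x_0})$; since $(I-T_{x_0})X\subset N$ with $\|I-T_{x_0}\|\leq 2$, the problem reduces to proving $\|A_t(\mathcal T)|_N\|=\delta(A_t(\mathcal T))\to 0$ as $t\to\infty$.

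The crux is a spectral reduction. I would introduce the auxiliary bounded operator $\Phi:=-\int_0^{t_0}(1-r/t_0)T_r\,dr$ and check, via integration by parts together with the standard regularising property of $T_r$-integrals, that $A\Phi=I-Q$ on all of $X$ (and $\Phi A=I-Q$ on $D(A)$), while $\Phi(N)\subset N\cap D(A)$. Since $\|Q|_N\|=\rho<1$, the operator $(I-Q)|_N$ is invertible via its Neumann series, and the factorisation promotes this to invertibility of the restricted generator $\tilde A:=A|_{N\cap D(A)}$ acting on $N$: surjectivity comes from $A\Phi|_N=(I-Q)|_N$, and injectivity is direct, since $\tilde Ay=0$ with $y\in N$ gives $T_ty=y$, whence $Qy=y$ and so $y=0$. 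The closed graph theorem then yields $0\in\rho(\tilde A)$. From the integrated identity $T_ty-y=t\,\tilde A\,A_t(\tilde{\mathcal T})y$ for $y\in N\cap D(A)$ one obtains $\|A_t(\tilde{\mathcal T})y\|\leq 2\|\tilde A^{-1}\|\,\|y\|/t$, and by density of $N\cap D(A)$ in $N$ this extends to $\|A_t(\mathcal T)-T_{x_0}\|=O(1/t)\to 0$, giving (iii). This final passage is essentially the contrapositive content of Lemma~\ref{UMELem}.

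The principal obstacle is exactly this gap between the single-time contraction $\delta(A_{t_0}(\mathcal T))<1$ and the global behaviour of the entire family $\{A_t(\mathcal T)\}_{t\geq 0}$: without a multiplicative structure one cannot simply iterate $Q$ in order to cover arbitrary times. The construction of the operator $\Phi$ is the crucial move, since it translates the discrete contraction on $N$ into a spectral statement about the infinitesimal generator, after which the resolvent identity supplies the required uniform $O(1/t)$ decay.
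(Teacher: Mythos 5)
Your proposal is correct, but your treatment of the substantive implication is genuinely different from the paper's. The paper never proves (ii) $\Rightarrow$ (iii) directly: it splits the work into (ii) $\Rightarrow$ (i), carried out entirely inside the Dobrushin-coefficient calculus (the estimate $\|A_t(\mathcal T)(I-A_{t_0}(\mathcal T))\|\leq t_0/t$ combined with Theorem \ref{Dob}(ii),(iii) gives the explicit bound $\delta(A_t(\mathcal T))\leq \frac{t_0}{t(1-\rho)}$, which is what later feeds Corollary \ref{per8}), and then (i) $\Rightarrow$ (iii), obtained by restricting to $N=\ker f$ and passing to the quotient $X/N$, invoking Lemma \ref{UMELem} and a pole-of-the-resolvent argument for the generator. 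You instead pass straight from $\delta(A_{t_0}(\mathcal T))<1$ to uniform mean ergodicity: the auxiliary operator $\Phi=-\int_0^{t_0}(1-r/t_0)T_r\,dr$ satisfying $A\Phi=I-Q$ (your identity is correct; one checks it via Fubini and closedness of $A$, or by parts on $D(A)$ and density) upgrades the Neumann-series invertibility of $(I-Q)|_N$ to $0\in\rho(\tilde A)$ for $\tilde A:=A|_{N\cap D(A)}$, and the integrated identity $T_ty-y=t\,A\,A_t(\mathcal T)y$ then yields $\delta(A_t(\mathcal T))\leq 2\|\tilde A^{-1}\|/t$. What each approach buys: yours bypasses the quotient/spectral-pole reasoning (and Lemma \ref{UMELem} altogether), making the equivalence more self-contained, at the price of using generator theory where the paper's (ii) $\Rightarrow$ (i) step is elementary coefficient calculus; and your rate is not worse, since $\tilde A^{-1}=\Phi\,((I-Q)|_N)^{-1}$ on $N$ gives $\|\tilde A^{-1}\|\leq\frac{t_0}{2(1-\rho)}$, recovering exactly the paper's bound $\delta(A_t(\mathcal T))\leq\frac{t_0}{t(1-\rho)}$. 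Three small remarks: your appeal to the discrete result of \cite{M0} for $Q^n\to T_{x_0}$ is legitimate here precisely because $X$ is a strong ASS (alternatively the Cauchy argument of Theorem \ref{Dob1} applies verbatim to the single operator $Q$); in (i) $\Rightarrow$ (ii) the inequality of Theorem \ref{Dob}(v) already suffices, so the equality of Remark \ref{sASS} is not really where strong ASS is needed; and since orbit integrals always lie in $D(A)$, the identity $T_ty-y=t\,\tilde A\,A_t(\mathcal T)y$ holds for every $y\in N$, so your density step can be dropped.
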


\begin{proof}
By Theorem \ref{Dob} (v) we infer the implication (i) $\Rightarrow $(ii). The implication  (iii)$ \Rightarrow
$(i) is obvious. Let us prove (ii)$\Rightarrow$ (i).

Assume that there exists $t_0\in\br_+$ such that $\d (A_{t_0}(T))<1$. Denote $\rho:=\d (A_{t_0}(T))$. Since $T_t$ is a Markov
operator on $X$ we have
\begin{eqnarray*}
\displaystyle
\left\| A_t(\mathcal T) (I - T_s) \right\| &\leq& \left\|\frac{1}{t} \int_0^t T_u du - \frac{1}{t} \int_0^t T_u T_s du \right\| \\
&=& \left\|  \frac{1}{t} \int_0^t T_u du - \frac{1}{t} \int_0^{t} T_{u+s}  du \right\|   \ \ \ \ \text{use change of variable as } \ u+s = z \\
\\
&=& \left\|  \frac{1}{t} \int_0^t T_u du - \frac{1}{t} \int_s^{t+s} T_z  dz \right\| \\
&\leq&  \frac{1}{t} \int_0^s  \left\|T_u \right\| du + \frac{1}{t}
\int_t^{t+s} \left\| T_u \right\| du  \leq \frac{2s}{t}.
\end{eqnarray*}
Hence, for each $s\in\br_+$ one gets
\begin{eqnarray*}
\left\| A_t(\mathcal T) (I- A_s(\mathcal T)) \right\|& =& \left\|
A_t(\mathcal T) \bigg(\frac{1}{s} \int_{0}^s
(I-T_u) du\bigg) \right\| \\[2mm]
&=& \left\| \frac{1}{s} \int_{0}^{s} A_t(\mathcal T) (I-T_u) du
\right\| \\[2mm]
&\leq & \frac{1}{s} \int_{0}^{s}\| A_t(\mathcal T) (I-T_u)\| du\\[2mm]
&\leq & \frac{1}{s} \int_{0}^{s}\frac{2u}{t}du\\[2mm]
&=&\frac{s}{t}
\end{eqnarray*}
which implies
\begin{equation}\label{dd1}
\d ( A_t(\mathcal T)(I- A_s(\mathcal T))\leq \| A_t(\mathcal T) (I-
A_s(\mathcal T))\|\leq\frac{s}{t}
\end{equation}

From Theorem \ref{Dob}(ii) one finds
\begin{equation*}
|\d ( A_t(\mathcal T) A_{t_0}(\mathcal T)) - \d ( A_t(\mathcal T))| \leq \d ( A_t(\mathcal T)(I -
A_{t_0}(\mathcal T)).
\end{equation*}

The last inequality with  Theorem \ref{Dob}(iii) yields that
\begin{eqnarray}\label{dd2}
\d ( A_t(\mathcal T)(I-A_{t_0}(\mathcal T))&\geq&  \d (A_t(\mathcal T))-\d ( A_t(\mathcal T)
 A_{t_0}(\mathcal T))\nonumber\\[2mm]
&\geq&  \d (A_t(\mathcal T))-\d (A_t(\mathcal T))\d(A_{t_0}(\mathcal T))\nonumber\\[2mm]
&\geq&  (1-\rho)\d (A_t(\mathcal T))
\end{eqnarray}

So, from \eqref{dd1} and \eqref{dd2} we obtain
$$
\d(A_t(\mathcal T))\leq \frac{t_0}{t(1-\rho)}
$$
this means $\lim\limits_{t\to\infty} \d (A_t(\mathcal T))=0$. Hence, taking into account that $X$ is a strong abstract state space, due to Remark \ref{sASS}, one gets (i).

Now, it is enough to prove  (i)$ \Rightarrow$(iii). Assume $\mathcal T$ is weakly mean ergodic. Let $N = ker f$ and we note that $N$ is closed $\mathcal T$-invariant subspace of $X$ and $X / N$ is one-dimensional.  Let ${\mathcal T}_|$ and ${\mathcal T}_/$ denote the subspaces semigroup on $N$ and the quotient semigroup on $X/N$, respectively. Since $N$ is $\mathcal T$-invariant closed subspace of $X$, then ${\mathcal T}_| = ({T_t}_| )_{t\geq 0}$  is strongly continous and the restriction of $A$, $A_|$, becomes the generator of ${\mathcal T}_|$ with domain $D(A_|)= D(A) \cap N$. Moreover the quotient semigroup ${\mathcal T}_/$ has the generator $A_/$ defined by $A_/ = q(x)= q(Ax)$ with the domain $D(A_/)= q(D(A))$ where $q: X \to X/N$ is the canonical quotient map.  

Since $\mathcal T$ is weakly mean ergodic, then $A_t ({\mathcal T}_|)$ converges to zero in norm . Thus by Lemma \ref{UMELem}, 0 is the resolvent set of ${\mathcal T}_|$.

On the other hand, $I- T_t$ is not invertible for each $t \geq 0$. Hence $I - {T_t}_/$ is not invertible. Since $X/N $ is one-dimensional, ${T_t}_/$ acts as the identitiy for each $t$ on $X/N$. Therefore 0 is a first pole of the resolvent of $A_/$. Since 0 is not a spectral value of $A_|$, 0 is a first pole of the resolvent of $A$. Hence 0 is an eigenvalue of $A$.  
\end{proof}

We notice that in \cite{EM2018} an analogous result for discrete semigroup of Markov operators has been proved.

\begin{corollary}\label{per8}
 Let  $(X, X_+, \ck, f)$ be a strong abstract state space and $\mathcal T =(T_t)_{t\geq 0}$
be a  $C_0$-Markov semigroup. If there exist $\rho \in [0,1)$ and
$t_0 \in\br_+$ such that $\d (A_{t_0}(T)) \leq \rho$, then one has
\begin{eqnarray*}
\sup_{x\in \ck}\left\| A_t(\mathcal T)x -x_0 \right\|\leq
\frac{2t_0}{t(1-\rho)},
\end{eqnarray*}
where $x_0$ is a unique fixed point of $\mathcal T$.
\end{corollary}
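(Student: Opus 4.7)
The plan is to piece this together from ingredients already developed in Theorem \ref{UME} and the basic properties of Dobrushin's ergodicity coefficient from Theorem \ref{Dob}. First, I would invoke Theorem \ref{UME}: the hypothesis $\d(A_{t_0}(\mathcal T))\leq \rho<1$ ensures that $\mathcal T$ is uniformly mean ergodic, so by Remark \ref{dd0} it admits a unique fixed point $x_0\in\ck$, which in particular satisfies $T_s x_0=x_0$ for every $s\geq 0$ and hence $A_t(\mathcal T)x_0=x_0$.

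Next, I would reuse the quantitative estimate derived inside the proof of Theorem \ref{UME}. Namely, combining the two inequalities
\begin{equation*}
\d\bigl(A_t(\mathcal T)(I-A_{t_0}(\mathcal T))\bigr)\leq \frac{t_0}{t}, \qquad \d\bigl(A_t(\mathcal T)(I-A_{t_0}(\mathcal T))\bigr)\geq (1-\rho)\d(A_t(\mathcal T)),
\end{equation*}
one obtains the key bound
\begin{equation*}
\d(A_t(\mathcal T))\leq \frac{t_0}{t(1-\rho)}.
\end{equation*}

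Then, for an arbitrary $x\in\ck$, using that $x_0$ is $\mathcal T$-invariant, I would write
\begin{equation*}
A_t(\mathcal T)x-x_0 = A_t(\mathcal T)x - A_t(\mathcal T)x_0 = A_t(\mathcal T)(x-x_0).
\end{equation*}
Since $f(x)=f(x_0)=1$, the vector $x-x_0$ belongs to the kernel $N$ of $f$, and hence by the definition \eqref{db} of the Dobrushin coefficient,
\begin{equation*}
\|A_t(\mathcal T)x-x_0\|\leq \d(A_t(\mathcal T))\,\|x-x_0\|\leq \frac{t_0}{t(1-\rho)}\,\|x-x_0\|.
\end{equation*}
Finally, the triangle inequality gives $\|x-x_0\|\leq \|x\|+\|x_0\|=2$ since $x,x_0\in\ck$, producing the advertised estimate uniformly in $x\in\ck$.

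There is essentially no genuine obstacle here; the substantive work sits in Theorem \ref{UME} (extracting $\d(A_t(\mathcal T))=O(1/t)$) and in Theorem \ref{Dob}(iv), which turns the ergodicity coefficient into an operator norm bound on the restriction to $N$. The only small point to watch is verifying $x-x_0\in N$ and observing that the factor 2 in the numerator comes solely from $\|x-x_0\|\leq 2$; no appeal to the strong abstract state space hypothesis is needed in this last step, that assumption having already been used through Theorem \ref{UME} to secure existence of the fixed point $x_0$.
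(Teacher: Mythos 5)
Your proof is correct and follows exactly the route the paper intends: the corollary is stated without proof precisely because the estimate $\d(A_t(\mathcal T))\leq \frac{t_0}{t(1-\rho)}$ is already extracted in the proof of Theorem \ref{UME}, and combining it with $A_t(\mathcal T)x-x_0=A_t(\mathcal T)(x-x_0)$, $x-x_0\in N$, and $\|x-x_0\|\leq 2$ is the natural (and the paper's implicit) way to conclude. Your closing observation is also accurate: the strong ASS hypothesis enters only through Theorem \ref{UME} (existence and uniqueness of $x_0$), not in the final estimate.
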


Now we prove an analogue of Theorem \ref{per6} for
$A_t(\mathcal{T})$.

\begin{theorem}\label{per7}
Let  $(X, X_+, \ck, f)$ be a strong abstract state space and  $\mathcal T =(T_t)_{t\geq
0}$ be a  $C_0$-Markov semigroup. If $\d (A_{t_0}(T)) < 1$ for some
$t_0>0$, then every $C_0$-Markov semigroup  $\mathcal S
=(S_t)_{t\geq 0}$ satisfying $\|A_{t_0}(\mathcal S)-A_{t_0}(\mathcal
T)\|<1-\d(A_{t_0}(\mathcal T))$, is uniformly mean ergodic and has a
unique fixed point $z_0\in\ck$ such that
\begin{eqnarray} \label{per72}
\|x_0-z_0\|\leq \frac{\|A_{t_0}(\mathcal S)-A_{t_0}(\mathcal
T)\|}{1-\d (A_{t_0}(\mathcal T))-\|A_{t_0}(\mathcal
S)-A_{t_0}(\mathcal T)\|},
\end{eqnarray}
here as before, $x_0$ is a unique fixed point of $\mathcal T$.
\end{theorem}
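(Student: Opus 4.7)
The approach is to mirror the proof of Theorem \ref{per6}, replacing the operators $T_{t_0}$ and $S_{t_0}$ with their Cesaro averages $A_{t_0}(\mathcal T)$ and $A_{t_0}(\mathcal S)$, and using Theorem \ref{UME} (in place of Theorem \ref{Dob1}) to pass from a uniform contraction estimate on $\ker f$ to uniform mean ergodicity. Let me write $\rho := \d(A_{t_0}(\mathcal T)) + \|A_{t_0}(\mathcal S) - A_{t_0}(\mathcal T)\|$, which is less than $1$ by hypothesis. By Theorem \ref{Dob}(ii) applied to the Markov operators $A_{t_0}(\mathcal T)$ and $A_{t_0}(\mathcal S)$, we have $\d(A_{t_0}(\mathcal S)) \leq \rho < 1$, so Theorem \ref{UME} already forces $\mathcal S$ to be uniformly mean ergodic; in particular, it possesses some fixed point $z_0 \in \ck$.

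Next I would identify $z_0$ explicitly. Since $A_{t_0}(\mathcal S)$ preserves $f$, it maps $N := \ker f$ into itself, and for $x \in N$
\begin{equation*}
\|A_{t_0}(\mathcal S) x\| \leq \|A_{t_0}(\mathcal S) - A_{t_0}(\mathcal T)\|\,\|x\| + \d(A_{t_0}(\mathcal T))\|x\| \leq \rho \|x\|,
\end{equation*}
so $I - A_{t_0}(\mathcal S)$ is invertible on $N$ with operator norm of the inverse bounded by $(1-\rho)^{-1}$. Because $(I - A_{t_0}(\mathcal S)) x_0 \in N$, the element
\begin{equation*}
z_0 := x_0 - (I - A_{t_0}(\mathcal S))^{-1}\bigl((I - A_{t_0}(\mathcal S)) x_0\bigr)
\end{equation*}
is the unique solution in the affine subspace $x_0 + N$ of $A_{t_0}(\mathcal S) y = y$. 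Since the fixed point of $\mathcal S$ guaranteed by Theorem \ref{UME} lies in $\ck \subset x_0 + N$ and satisfies $A_{t_0}(\mathcal S) y = y$, it must coincide with this $z_0$, which is thus in $\ck$.

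For the bound \eqref{per72}, I would use the identity
\begin{equation*}
z_0 - x_0 = A_{t_0}(\mathcal T)(z_0 - x_0) + \bigl(A_{t_0}(\mathcal S) - A_{t_0}(\mathcal T)\bigr)(z_0 - x_0) + \bigl(A_{t_0}(\mathcal S) - A_{t_0}(\mathcal T)\bigr) x_0,
\end{equation*}
obtained by inserting $A_{t_0}(\mathcal T) x_0 = x_0$ and $A_{t_0}(\mathcal S) z_0 = z_0$. Since $z_0 - x_0 \in N$, the first term has norm at most $\d(A_{t_0}(\mathcal T))\|z_0 - x_0\|$; the second is at most $\|A_{t_0}(\mathcal S) - A_{t_0}(\mathcal T)\|\,\|z_0 - x_0\|$; and the third is at most $\|A_{t_0}(\mathcal S) - A_{t_0}(\mathcal T)\|$ because $\|x_0\| = 1$. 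Collecting the $\|z_0 - x_0\|$ factors on the left and dividing by $1 - \rho$ gives exactly \eqref{per72}. Uniqueness of the fixed point in $\ck$ then follows: any second fixed point $\tilde z_0$ of $\mathcal S$ satisfies $A_{t_0}(\mathcal S)\tilde z_0 = \tilde z_0$ and $\tilde z_0 - z_0 \in N$, and the invertibility of $I - A_{t_0}(\mathcal S)$ on $N$ forces $\tilde z_0 = z_0$.

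The main obstacle I foresee is not the algebra but rather confirming that the algebraically constructed $z_0$ is positive. In the analogous Theorem \ref{per6} this is glossed, but the cleanest way is the detour above: use $\d(A_{t_0}(\mathcal S)) < 1$ together with Theorem \ref{UME} to produce some fixed point in $\ck$ and then invoke the uniqueness of solutions to $A_{t_0}(\mathcal S) y = y$ in $x_0 + N$ to recognize that fixed point as $z_0$. Here the strong-ASS hypothesis is essential, since Theorem \ref{UME} is stated for strong abstract state spaces only.
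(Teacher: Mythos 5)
Your proof is correct and follows essentially the same route as the paper's: the contraction estimate $\|A_{t_0}(\mathcal S)x\|\le\rho\|x\|$ on $\ker f$, the Neumann-series invertibility of $I-A_{t_0}(\mathcal S)$ there, the same three-term identity yielding \eqref{per72}, and Theorem \ref{UME} to get uniform mean ergodicity from $\d(A_{t_0}(\mathcal S))<1$. The only (welcome) difference is one of ordering: by first producing a fixed point in $\ck$ via Theorem \ref{UME} and Remark \ref{dd0} and then identifying it with the algebraically constructed solution in the affine set $x_0+\ker f$, you justify that $z_0\in\ck$, a positivity point the paper's proof asserts without comment.
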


\begin{proof} We follow the argument of the proof of Theorem \ref{per6}. First we prove that the operator $(I-A_{t_0}(\mathcal
S))$ is invertible on $N$ (see \eqref{NN}). Indeed, take
any $x\in N$, then we have
\begin{eqnarray}\label{per71}
\|A_{t_0}(\mathcal S)x\|&\leq&\|(A_{t_0}(\mathcal S)-A_{t_0}(\mathcal T)x\|+\|A_{t_0}(\mathcal T)x\|\nonumber\\[2mm]
&\leq&\bigg(\underbrace{\|(A_{t_0}(\mathcal S)-A_{t_0}(\mathcal T)\|+\d(A_{t_0}(\mathcal T))}_{\rho}\bigg)\|x\|\nonumber\\[2mm]
&=&\rho\|x\|.
\end{eqnarray}
Hence by \eqref{per71} one gets $\|(A_{t_0}(\mathcal S))^{n}x\|\leq
\rho^n\|x\|$ for all $n\in\bn$. Therefore, the series
$\sum_{n}(A_{t_0}(\mathcal S))^{n}x$ converges. Using the standard
technique, one can see that
$$
(I-A_{t_0}(\mathcal S))^{-1}x=\sum_{n}A_{t_0}(\mathcal S)^{n}x
$$
and moreover, $\|(I-A_{t_0}(\mathcal S))^{-1}x\|\leq
\frac{\|x\|}{1-\rho}$, for all $x\in N$. This means that
$(I-A_{t_0}(\mathcal S))$ is invertible on $N$.

It is clear that the equation $A_{t_0}(\mathcal S)z_0=z_0$ with
$z_0\in\ck$ equivalent to $$(I-A_{t_0}(\mathcal
S))(z_0-x_0)=-(I-A_{t_0}(\mathcal S))x_0.$$ Due to
$(I-A_{t_0}(\mathcal S))x_0\in N$ we conclude the last equation has
a unique solution
$$
z_0=x_0-(I-A_{t_0}(\mathcal S))^{-1}((I-A_{t_0}(\mathcal S))x_0).
$$
From the identity
$$
z_0-x_0=A_{t_0}(\mathcal T)(z_0-x_0)+(A_{t_0}(\mathcal
S)-A_{t_0}(\mathcal T))(z_0-x_0)+(A_{t_0}(\mathcal
S)-A_{t_0}(\mathcal T))x_0
$$
and keeping in mind $z_0-x_0\in N$ one finds
\begin{eqnarray}\label{per73}
\|z_0-x_0\|&\leq& \big(\d(A_{t_0}(\mathcal T))+\|A_{t_0}(\mathcal
S)-A_{t_0}(\mathcal T)\|\big)\|z_0-x_0\|\\[2mm]
&&+\|A_{t_0}(\mathcal S)-A_{t_0}(\mathcal T)\|
\end{eqnarray}
which implies \eqref{per72}.

From $A_{t_0}(\mathcal S)(S_tz_0)=S_t(A_{t_0}(\mathcal
S)z_0)=S_tz_0$, and the uniqueness of $z_0$ for $A_{t_0}(\mathcal
S)$ we infer that $S_tz_0=z_0$ for all $t\in\br_+$, i.e. $z_0$ is a
a fixed point of $\mathcal S$. Moreover, due to \eqref{per71} one
concludes that $\d(A_{t_0}(\mathcal S))<1$, which by Theorem
\ref{UME} yields that $\mathcal S$ is uniformly mean ergodic. Hence,
$z_0$ is a unique fixed point for $\mathcal S$ (see Remark
\ref{dd0}). This completes the proof.
\end{proof}

From Corollary \ref{per8} and Theorem \ref{per7} we immediately
obtain the following result.

\begin{corollary} Let the conditions of Theorem \ref{per7} be
satisfied.  Then for every $x,z\in\ck$ one has
\begin{eqnarray*} \label{per72}
\sup_{x,z\in\ck}\|A_{t_0}(\mathcal T)x-A_{t_0}(\mathcal S)z\|&\leq&
\frac{2t_0}{t(1-\d (A_{t_0}(\mathcal T)))}\\[2mm]
&&+\frac{2t_0}{t(1-\d
(A_{t_0}(\mathcal
T))-\|A_{t_0}(\mathcal S)-A_{t_0}(\mathcal T)\|)}\\[2mm]
&&+
 \frac{\|A_{t_0}(\mathcal
S)-A_{t_0}(\mathcal T)\|}{1-\d (A_{t_0}(\mathcal
T))-\|A_{t_0}(\mathcal S)-A_{t_0}(\mathcal T)\|}.
\end{eqnarray*}
\end{corollary}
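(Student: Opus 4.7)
The plan is to combine the two preceding results through a three-term triangle inequality. First I introduce the unique fixed points $x_0 \in \ck$ of $\mathcal T$ (guaranteed by Theorem \ref{UME} together with Remark \ref{dd0}) and $z_0 \in \ck$ of $\mathcal S$ (guaranteed by Theorem \ref{per7}), then for arbitrary $x,z \in \ck$ I would write
$$
\|A_t(\mathcal T)x - A_t(\mathcal S)z\| \leq \|A_t(\mathcal T)x - x_0\| + \|x_0 - z_0\| + \|z_0 - A_t(\mathcal S)z\|.
$$
The task then is to bound each of the three summands by the corresponding term on the right-hand side of the claimed inequality.

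For the first summand, Corollary \ref{per8} applied to $\mathcal T$ (with $\rho = \d(A_{t_0}(\mathcal T))$) yields $\|A_t(\mathcal T)x - x_0\| \leq \frac{2t_0}{t(1-\d(A_{t_0}(\mathcal T)))}$, which matches the first term in the statement. For the middle summand I would invoke the fixed-point perturbation bound \eqref{per72} from Theorem \ref{per7}, which gives the third term in the claim exactly.

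The third summand requires a little more care, since in order to apply Corollary \ref{per8} to $\mathcal S$ I need an explicit upper bound on $\d(A_{t_0}(\mathcal S))$ that is strictly less than one. Here I would reuse estimate \eqref{per71} established inside the proof of Theorem \ref{per7}, which shows that $\|A_{t_0}(\mathcal S)x\| \leq (\d(A_{t_0}(\mathcal T)) + \|A_{t_0}(\mathcal S) - A_{t_0}(\mathcal T)\|)\|x\|$ for every $x \in N$; by the definition \eqref{db} of the ergodicity coefficient this immediately gives
$$
\d(A_{t_0}(\mathcal S)) \leq \d(A_{t_0}(\mathcal T)) + \|A_{t_0}(\mathcal S) - A_{t_0}(\mathcal T)\| < 1,
$$
the last inequality being precisely the hypothesis of Theorem \ref{per7}. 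Applying Corollary \ref{per8} to $\mathcal S$ with this $\rho$ in place of $\d(A_{t_0}(\mathcal S))$ then controls the third summand by $\frac{2t_0}{t(1-\d(A_{t_0}(\mathcal T))-\|A_{t_0}(\mathcal S) - A_{t_0}(\mathcal T)\|)}$, which is the second term in the claim.

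Adding the three estimates and taking the supremum over $x,z \in \ck$ (which is harmless since none of the three bounds depend on $x$ or $z$) finishes the argument. There is no real obstacle here; the only subtle point is remembering that the uniform mean ergodicity of $\mathcal S$, and in particular the bound on $\d(A_{t_0}(\mathcal S))$, is not assumed directly but follows from Theorem \ref{per7} through the auxiliary estimate \eqref{per71}.
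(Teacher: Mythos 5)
Your argument is correct and is precisely the intended route: the paper states the corollary follows immediately from Corollary \ref{per8} and Theorem \ref{per7}, and your three-term triangle inequality through the fixed points $x_0$ and $z_0$, with Corollary \ref{per8} applied to both semigroups (using the bound $\d(A_{t_0}(\mathcal S))\leq \d(A_{t_0}(\mathcal T))+\|A_{t_0}(\mathcal S)-A_{t_0}(\mathcal T)\|$ from \eqref{per71}) and the estimate \eqref{per72} for $\|x_0-z_0\|$, is exactly that combination. You also correctly read the left-hand side as $A_t$ rather than $A_{t_0}$, which the $t$-dependence of the bound requires.
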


\begin{remark} We note that all obtained results in this section are new even if  one takes X as predual of
either a von Neumann algebra or a JBW-algebra. Moreover, they give new insight between the rate of convergence to stationary
points of perturbed uniform/weak mean ergodic $C_0$-Markov semigroups.  
\end{remark}

\bigskip
\bigskip

\section{Unique ergodicity and weighted averages of $C_0$-Markov semigroups}

In this section, we study unique ergodicity of $C_0$-Markov
semigroups in terms of weighted averages.

Let $(X,X_+,\ck,f)$ be an abstract state space. We first recall that a $C_0$-Markov
semigroup $\mathcal T=(T_t)_{t\geq 0}$ defined on $X$ is called {\it
uniquely ergodic} if $\mathcal T$ has a unique fixed point belonging
to $\ck$.

\begin{remark}
Let $\mathcal T=(T_t)_{t\geq 0}$ be a $C_0$-Markov semigroup. If for
some $t_0>0$ the operator $T_{t_0}$ has a unique fixed point
$x_0\in\ck$, then $\mathcal T$ is uniquely ergodic. Indeed, due to
$T_tx_0=T_tT_{t_0}x_0=T_{t+t_0}x_0=T_{t_0}T_tx_0$, so the uniqueness
of the fixed point for $T_{t_0}$ and Markovianity of $T_t$ imply that $T_tx_0=x_0$ for all $t\in\br_+$.
\end{remark}

A positive function $b=b(s)$ defined on $\br_+$ is called a
\textit{weight} if $\displaystyle \int_0^t b(s) ds \to \infty$ as
$t\to\infty$.

We say that a weight $b$ belongs to the \textit{class $\cw$} if for every
$s\in\br_+$ one has
\begin{equation}\label{WA1}
\lim_{t\to\infty}\frac{\int_s^t |b(u) - b(u-s)| du}{\int_0^t b(u)
du}= 0.
\end{equation}

Let us investigate some properties of the class $\cw$.

\begin{proposition}\label{cW} The following statements hold:
\begin{itemize}
\item[(i)] Let $b$ be a weight function. Assume that there is $t_0\geq 0$ such that $\int_0^{t_0}b(s)ds\leq M$ (for some $M>0$) and $b(t)$ is not decreasing (resp. increasing) for $t\geq t_0$. Then
$b\in \cw$;

\item[(ii)]  if  $b\in\cw$, then for every $\l\in\br_+$  one has $\l b\in
\cw$;

\item[(iii)]  if  $b,g\in\cw$, then   $b+g\in
\cw$;

\item[(iv)] let $b,g\in\cw$ such that $b,g$ are bounded, and one has 
\begin{equation}\label{cW1}
\inf_{t\in \br_+} g(t)\geq \a>0, \ \ \ \inf_{t\in\br_+} b(t)\geq \g>0.
\end{equation}
Then 
$b\cdot g\in
\cw$.
\end{itemize}
\end{proposition}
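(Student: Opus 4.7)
The plan is to derive each item by a direct estimate of the ratio defining class $\cw$ in \eqref{WA1}, using throughout the elementary bound $\frac{A+B}{C+D}\le\frac{A}{C}+\frac{B}{D}$ valid for positive $A,B,C,D$ whenever two such ratios must be combined.

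For (i), I would split $\int_s^t|b(u)-b(u-s)|\,du$ into the contributions on $[s,t_0+s]$ and $[t_0+s,t]$. The first is bounded by $\int_s^{t_0+s}b(u)\,du+\int_0^{t_0}b(v)\,dv\le\int_s^{t_0+s}b(u)\,du+M$, a finite constant depending only on $s,t_0,M$. On the second interval both $u$ and $u-s$ lie in the monotonic regime, so monotonicity removes the absolute value, and telescoping via the substitution $v=u-s$ gives
\[
\int_{t_0+s}^t\bigl(b(u)-b(u-s)\bigr)\,du=\int_{t-s}^t b(u)\,du-\int_{t_0}^{t_0+s}b(u)\,du,
\]
with the sign reversed in the non-increasing case. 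Dividing by $\int_0^t b\to\infty$ kills the constant pieces; in the non-increasing case the tail $\int_{t-s}^t b$ enters with the helpful sign and is itself bounded by a constant, so the ratio vanishes. Item (ii) is then immediate, since $\l$ cancels between numerator and denominator.

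For (iii), I would apply the triangle inequality to $(b+g)(u)-(b+g)(u-s)$ and then the elementary ratio bound with $C=\int_0^t b$, $D=\int_0^t g$, which majorizes the combined ratio by the sum of the individual ones, each of which tends to $0$. For (iv), the key identity is
\[
(bg)(u)-(bg)(u-s)=b(u)\bigl(g(u)-g(u-s)\bigr)+g(u-s)\bigl(b(u)-b(u-s)\bigr).
\]
Taking absolute values, bounding $b(u)\le\|b\|_\infty$ and $g(u-s)\le\|g\|_\infty$, and using the lower bounds $bg\ge\g g$ and $bg\ge\a b$ from \eqref{cW1} so that $\int_0^t bg\ge\g\int_0^t g$ and $\int_0^t bg\ge\a\int_0^t b$, the ratio for $bg$ is majorized by $\frac{\|b\|_\infty}{\g}$ times the defining ratio for $g$ plus $\frac{\|g\|_\infty}{\a}$ times the defining ratio for $b$, both of which vanish. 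Moreover $bg$ is itself a weight, since $\int_0^t bg\ge\g\int_0^t g\to\infty$.

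The main obstacle I anticipate is part (i) in the non-decreasing case: the tail $\int_{t-s}^t b/\int_0^t b$ need not tend to $0$ for an arbitrary non-decreasing positive $b$ (take $b(u)=e^u$), so some implicit growth control on $b$ — typically boundedness, the natural setting for averaging weights — must be read into the hypothesis in order to close this step. The remaining items (ii)--(iv) are then essentially bookkeeping with the elementary ratio inequality once (i) is in hand.
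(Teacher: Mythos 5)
Your treatment of (ii)--(iv) and of the non-increasing half of (i) is essentially the paper's own argument: the same splitting of $\int_s^t|b(u)-b(u-s)|\,du$ at the edge of the monotone regime, the same elementary bound $\frac{A+B}{C+D}\le\frac{A}{C}+\frac{B}{D}$ for (iii), and for (iv) the same product decomposition combined with $\int_0^t b(u)g(u)\,du\ge\g\int_0^t g(u)\,du$ and $\int_0^t b(u)g(u)\,du\ge\a\int_0^t b(u)\,du$; your remark that $b\cdot g$ is again a weight is a small detail the paper leaves implicit.

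The obstacle you flag in the non-decreasing case is genuine, but it is a defect of the proposition and of the paper's proof rather than of your argument. After telescoping, the paper's estimate reduces to the claim that
\[
\frac{\int_{t_0}^t b(u)\,du-\int_{t_0-s}^{t-s}b(u)\,du}{\int_0^t b(u)\,du}
=\frac{\int_{t-s}^{t}b(u)\,du-\int_{t_0-s}^{t_0}b(u)\,du}{\int_0^t b(u)\,du}\longrightarrow 0,
\]
which is exactly the vanishing of the tail ratio $\int_{t-s}^t b(u)\,du\big/\int_0^t b(u)\,du$ that you observe is not automatic; the paper asserts it without justification. Your example $b(u)=e^u$ satisfies every hypothesis of (i) in the non-decreasing case (it is a weight, non-decreasing, and $\int_0^{t_0}b(s)\,ds\le M$ for any fixed $t_0$), yet the quotient in \eqref{WA1} equals $(1-e^{-s})(e^t-e^s)/(e^t-1)\to 1-e^{-s}>0$, so $e^u\notin\cw$ and statement (i) is false as written for non-decreasing weights. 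A correct version adds, in the non-decreasing case, the hypothesis that $\int_{t-s}^t b(u)\,du\big/\int_0^t b(u)\,du\to 0$ for every $s$; boundedness of $b$, which you propose, is sufficient but more than is needed, since for non-decreasing $b$ the condition $b(t)=o\big(\int_0^t b(u)\,du\big)$ already suffices. The weights of Example \ref{we} grow at most polynomially and satisfy this condition, so the later applications are unaffected.
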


\begin{proof}  (i). Assume that $b$ is non decreasing. From 
$$
\lim_{t\to\infty}\int_{0}^tb(u)du=\infty
$$
we obtain (without loss of generality we may assume that $s\leq
t_0$)
\begin{eqnarray*} \frac{\int_s^t |b(u) -
b(u-s)| du}{\int_0^t b(u) du}&\leq&\frac{ 2M}{\int_0^t b(u)
du}+\frac{\int_{t_0}^t (b(u) - b(u-s)) du}{\int_0^t b(u) du}\\[2mm]
&=&\frac{2M(t_0-s)}{\int_0^t b(u) du}+\frac{\int_{t_0}^t
b(u)du}{\int_0^t b(u) du}-\frac{\int_{t_0-s}^{t-s} b(u)du}{\int_0^t
b(u) du} \to 0  \ \ \textrm{as} \ \ t\to\infty.
\end{eqnarray*}
Hence, \eqref{WA1} holds, which yields the assertion.  Non increasing case can be proceeded by the same argument.

The statement (ii) is obvious. 
Let us prove (iii). Assume that $b,g\in\cw$. Then we have
\begin{eqnarray*}
\frac{\int_s^t |b(u)+g(u) -b(u-s)-g(u-s)| du}{\int_0^t (b(u)+g(u)) du}
&\leq&
\frac{\int_s^t |b(u) -
b(u-s)| du+\int_s^t |g(u) -
g(u-s)| du}{\int_0^t (b(u)+g(u)) du}\\[2mm]
&\leq&
\frac{\int_s^t |b(u) -
b(u-s)| du}{\int_0^t b(u)du}\\[2mm]
&&+\frac{\int_s^t |g(u) -
g(u-s)| du}{\int_0^t g(u) du}\to 0 \ \ \ \textrm{as} \ \ t\to\infty
\end{eqnarray*}
which shows that $b+g\in\cw$.

(iv) From \eqref{cW1} we obtain
$$
\int_0^t b(u) g(u) du\geq \a \int_0^t b(u) du, \ \ \  \int_0^t b(u) g(u) du\geq \g \int_0^t g(u) du.
$$
Hence, one gets
\begin{eqnarray*}
\frac{\int_s^t |b(u)g(u) -b(u-s)g(u-s)| du}{\int_0^t b(u)g(u) du}
&\leq&
\frac{\int_s^t |b(u)||g(u) -g(u-s)| du}{\int_0^t b(u)g(u) du}\\[2mm]
&&+\frac{\int_s^t |b(u) -b(u-s)||g(u-s)| du}{\int_0^t b(u)g(u) du}\\[2mm]
&\leq&
\frac{\sup|b|\int_s^t |g(u) -g(u-s)| du}{\g\int_0^t g(u) du}\\[2mm]
&&+
\frac{\sup|g|\int_s^t |b(u) -b(u-s)| du}{\a\int_0^t b(u) du}\to 0
\end{eqnarray*}
as $t\to\infty$. This means that $b\cdot g\in\cw$. 
\end{proof}

The proved proposition yields that the class is very huge. Now let us provide 
some concrete examples of weight functions $b$ which belongs to
$\cw$.

\begin{example}\label{we}
From Proposition \ref{cW} (i), we immediately find the following weights which belong to the class $\cw$.
\begin{eqnarray*}
&&b(t)=t^\a, \ \ \a>-1;\\[2mm]
&& b(t)=t^\b\ln^{\g}t, \ \ \b,\g\geq 0.
\end{eqnarray*}
\end{example}

For  a given weight function $b$, let us denote
\begin{equation}\label{WA} A_{b,t} (\mathcal T) =
\frac{1}{\int_0^t b(s) ds} \int_0^t b(s) T_s ds
\end{equation}
Clearly, if  $b(t)\equiv 1$, then $A_{b,t} (\mathcal T)=A_{t}
(\mathcal T)$.

A main result of this section is the following one.

\begin{theorem}\label{ATT}
Let  $\mathcal T=(T_t)_{t\geq 0}$ be a $C_0$-Markov semigroup on an
abstract state space $X$ and $x_0$ be a fixed point of $\mathcal T$. Then the following conditions are
equivalent:
\begin{itemize}
\item[(i)] For every $x \in X$ one has
$$ \left\| A_t (\mathcal T) (x) - T_{x_0} x \right\|
\to 0 \ \ \ \textrm{ as} \ \  t\to\infty;$$
\item[(ii)] one has $\displaystyle X = \mathbb C x_0 \oplus \displaystyle \overline{\bigcup\limits_{t\geq 0} (I - T_t)
(X)}$;\\
\item[(iii)] For each $b\in\cw$ and every $x \in X$ one has
$$\displaystyle \left\| A_{b,t} (\mathcal T) (x) - T_{x_0} x
\right\| \to 0\ \ \ \textrm{ as} \ \  t\to\infty.$$
\end{itemize}
Moreover, (iii) implies
\begin{itemize}\item[(iv)] $\mathcal T$ is uniquely ergodic.\\
\end{itemize}

If there is an order unit space $(V,e)$ such that $V^*=X$, then all
(i)-(iv) statements are equivalent.
\end{theorem}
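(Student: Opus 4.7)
The plan is to establish the cycle (ii) $\Rightarrow$ (iii) $\Rightarrow$ (i) $\Rightarrow$ (ii), together with the side implication (iii) $\Rightarrow$ (iv), which gives the equivalence of (i)--(iii) and the claimed implication on (iv); the final sentence about the order-unit setting will be handled separately by closing the loop via (iv) $\Rightarrow$ (ii). The two crucial algebraic inputs are the Markov identity $f\circ T_s=f$, which yields both $T_{x_0}(I-T_s)=0$ and $(I-T_s)(X)\subset N:=\ker f$, and the contraction estimate $\|A_{b,t}\|\leq 1$, which propagates pointwise convergence on a dense subspace to all of $X$.

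The routine implications come first. Since $b\equiv 1\in\cw$ (Example~\ref{we}), (iii) implies (i) trivially. For (iii) $\Rightarrow$ (iv), any fixed state $x_0'\in\ck$ satisfies $x_0'=A_{b,t}(\mathcal T)x_0'\to T_{x_0}x_0' = f(x_0')x_0 = x_0$, forcing $x_0'=x_0$. For (ii) $\Rightarrow$ (i), observe that $T_{x_0}$ is the identity on $\mathbb C x_0$ and vanishes on $(I-T_s)(X)$, whereas $A_t$ is the identity on $\mathbb C x_0$ and satisfies $\|A_t(I-T_s)x\|\leq 2s\|x\|/t$ by the telescoping identity $A_t(I-T_s)x=\frac{1}{t}(\int_0^s T_ux\,du-\int_t^{t+s}T_ux\,du)$; density together with $\|A_t\|\leq 1$ finishes. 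Conversely, for (i) $\Rightarrow$ (ii), the decomposition $x=T_{x_0}x+(x-T_{x_0}x)$ works because $x-T_{x_0}x=\lim_{t\to\infty}(x-A_tx)=\lim \frac{1}{t}\int_0^t(I-T_s)x\,ds$ lies in $\overline{\bigcup_s(I-T_s)X}$, and the sum is direct thanks to $T_{x_0}|_{(I-T_s)X}=0$.

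The heart of the argument is (ii) $\Rightarrow$ (iii). Set $F(t):=\int_0^t b(u)\,du$. For $y=(I-T_s)x$, the substitution $v=u+s$ produces the clean decomposition
\[
\int_0^t b(u)T_uy\,du = \int_0^s b(u)T_ux\,du + \int_s^t\bigl(b(u)-b(u-s)\bigr)T_ux\,du - \int_t^{t+s} b(u-s)T_ux\,du.
\]
After dividing by $F(t)$: the first term is bounded by $F(s)\|x\|/F(t)\to 0$ (weight property); the middle term by $\|x\|\int_s^t|b(u)-b(u-s)|du/F(t)\to 0$ by the defining property~\eqref{WA1} of $\cw$; and the third by $(F(t)-F(t-s))\|x\|/F(t)$. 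For this last piece one needs $F(t-s)/F(t)\to 1$, which I will extract from the identity $F(t)-F(s)-F(t-s)=\int_s^t(b(u)-b(u-s))du$ (obtained by the same substitution) by dividing by $F(t)$ and using $F(s)/F(t)\to 0$ together with~\eqref{WA1}. Therefore $A_{b,t}y\to 0=T_{x_0}y$ on each $(I-T_s)X$, and trivially $A_{b,t}x_0=x_0=T_{x_0}x_0$, so $A_{b,t}\to T_{x_0}$ on the dense subspace afforded by (ii); the contractivity $\|A_{b,t}\|\leq 1$ extends this to all of $X$.

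For the final assertion, assume $X=V^*$ with $(V,e)$ an order-unit space and $\mathcal T$ uniquely ergodic. Set $Y:=\overline{\bigcup_s(I-T_s)X}$; then $Y\subset N$ (since $f\circ T_s=f$), and the canonical splitting $x=f(x)x_0+(x-f(x)x_0)$ gives $X=\mathbb C x_0+N$ with trivial intersection. The remaining task is to show $Y=N$, which would yield (ii) and hence (i) and (iii). My strategy is to pass to the preadjoint semigroup on $V$: the preannihilator ${}^{\perp}Y\subset V$ coincides with the fixed space of the preadjoint, which by a standard duality argument in the order-unit setting combined with unique ergodicity should reduce to $\mathbb C e = {}^{\perp}N$, and the bipolar theorem then identifies the weak*-closure of $Y$ with $N$. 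The main obstacle is the last step upgrading weak*-closure to norm-closure, since in a general dual space these two closures of a subspace need not coincide; I expect to overcome it by exploiting the positivity of $\mathcal T$ together with weak*-compactness of $\ck$, paralleling the discrete-time argument in~\cite{EM2018}.
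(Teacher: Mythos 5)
Your treatment of the equivalences (i) $\Leftrightarrow$ (ii) $\Leftrightarrow$ (iii) and of (iii) $\Rightarrow$ (iv) is essentially correct, and in places more self-contained than the paper: the paper quotes \cite{EE2} for (i) $\Leftrightarrow$ (ii), whereas you prove both directions directly; your (ii) $\Rightarrow$ (iii) uses the same change-of-variables decomposition as the paper, and your derivation of $F(t-s)/F(t)\to 1$ from the identity $F(t)-F(s)-F(t-s)=\int_s^t\bigl(b(u)-b(u-s)\bigr)\,du$ together with \eqref{WA1} supplies a justification for the boundary term that the paper merely asserts. One small point in your (i) $\Rightarrow$ (ii): $\bigcup_{t\geq 0}(I-T_t)(X)$ is a union of subspaces, not a subspace, so the membership of $x-A_t(\mathcal T)x=\frac1t\int_0^t(I-T_s)x\,ds$ in its closure needs an argument. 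It does hold, because $(I-T_s)x=-A\int_0^sT_ux\,du\in\mathrm{ran}(A)$ (with $A$ the generator) and conversely $\mathrm{ran}(A)\subset\overline{\bigcup_{t\geq 0}(I-T_t)(X)}$ via difference quotients, so the closure of the union equals $\overline{\mathrm{ran}(A)}$, a closed subspace containing all Riemann sums of the integrand; add a sentence to this effect.

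The genuine gap is the final assertion, namely (iv) $\Rightarrow$ (i) when $X=V^*$ for an order-unit space $(V,e)$: you do not prove it, and the route you sketch stalls in two places, not one. First, the identification ${}^{\perp}Y=\mathrm{Fix}(\mathcal T_*)$ is fine, but the claim that unique ergodicity of $\mathcal T$ forces $\mathrm{Fix}(\mathcal T_*)=\bc e$ is unjustified: unique ergodicity controls fixed points of $\mathcal T$ inside $\ck\subset X$ and says nothing directly about fixed vectors of the preadjoint semigroup in $V$; indeed, by bipolars, $\mathrm{Fix}(\mathcal T_*)=\bc e$ is equivalent to weak$^*$-density of $Y$ in $N$, so this step is essentially the statement to be proved. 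Second, even granting it, the bipolar theorem only yields the weak$^*$-closure of $Y$, and you offer no mechanism to upgrade to norm closure. The paper avoids both issues by working entirely in the predual: it introduces $\mathcal T_*=(T_{*,t})_{t\geq 0}$ on $V$, notes $A_t(\mathcal T_*)u\to Pu$ for $u\in\mathrm{Fix}(\mathcal T_*)\oplus\overline{\bigcup_{t\geq 0}(I-T_{*,t})(V)}$ (norm closure in $V$), and proves $\ker x_0=\overline{\bigcup_{t\geq 0}(I-T_{*,t})(V)}$ by Hahn--Banach separation performed in $V$: a functional $h$ annihilating $\bigcup_{t\geq 0}(I-T_{*,t})(V)$ lies in $V^*=X$ and satisfies $T_th=h$, so unique ergodicity is applied to conclude $h\in\bc x_0$, contradicting $h(u_0)=1$; the convergence is then carried back to $X$ by duality. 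Rebuild your last step along these lines rather than via bipolars in $X$; note also that both your route and the paper's presuppose that each $T_t$ is weak$^*$-continuous so that the preadjoint semigroup on $V$ exists, a hypothesis worth making explicit.
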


\begin{proof}
The implications (i)$ \Leftrightarrow$ (ii) follows from \cite{EE2}.
Let us consider the implication (ii)$\Rightarrow$ (iii). We have
\begin{eqnarray}\label{1}
\displaystyle
\left\| A_{b,t} (\mathcal T) (I - T_u) \right\| &=& \left\| \frac{1}{\int_0^t b(s) ds} \int_0^t b(s) T_s ds - \frac{1}{\int_0^t b(s) ds} \int_0^t b(s) T_{s+u} ds\right\|  \nonumber \\
&= &  \frac{1}{\int_0^t b(s) ds} \left\| \int_0^t b(s) T_s ds -  \int_u^{t+u} b(s-u) T_s ds\right\|  \nonumber \\
&\leq & \frac{1}{\int_0^t b(s) ds} \left\| \int_0^u b(s) T_s ds -  \int_t^{t+u} b(s-u) T_s ds\right\| \nonumber \\ & & + \frac{1}{\int_0^t b(s) ds} \left\| \int_u^t ( b(s) -  b(s-u)) T_s ds\right\| \nonumber \\
&\leq& \frac{\int_0^u b(s) ds + \int_t^{t+u} b(s-u) ds}{\int_0^t
b(s) ds} + \frac{1}{\int_0^t b(s) ds} \int_u^t |b(s) - b(s-u)| ds.
\end{eqnarray}

Since $b$ is weight and \eqref{WA1} holds, one gets
\begin{equation}\label{2} \displaystyle \frac{1}{\int_0^t b(s) ds}
\int_s^t |b(s) - b(u-s)| ds \to 0, \,\,\, \,\, \frac{1}{\int_0^t
b(s) ds} \int_0^u b(s) ds \to 0 \,\,\, \text{as} \,\, t\to\infty.
\end{equation}

Moreover, one finds
\begin{eqnarray}\label{311}
\frac{1}{\int_0^t b(s) ds} \int_t^{t+u} b(s-u) ds &=& \frac{1}{\int_0^t b(s) ds} \int_{t-u}^{t} b(s) ds \nonumber\\ &=& \frac{1}{\int_0^t b(s) ds}
\bigg( \int_0^{t} b(s) ds - \int_0^{t-u} b(s) ds \bigg) \nonumber \\
&=& 1 - \frac{1}{\int_0^t b(s) ds} \int_0^{t-u} b(s) ds\to 0 \,\,\,
\text{as} \,\,\, t\to\infty.
\end{eqnarray}

Hence, from (\ref{2}) and (\ref{311}), due to (\ref{1}), for any
$u\in\br_+$, we obtain
\begin{eqnarray}\label{31}
\left\| A_{b,t} (\mathcal T) (I - T_u) \right\| \to 0\ \ \ \text{as}
\,\,\, t\to\infty.
\end{eqnarray}

Since $x_0$ is a fixed point of $(T_t)_{t\geq 0}$, due to $X = \bc
x_0 \oplus \overline{\cup_{t\geq 0} (I - T_t) (X)}$, for every $x\in
X$, we obtain
$$
\lim_{t\to\infty}\left\| A_{b,t} (\mathcal T) x - T_{x_0} x
\right\|=0.
 $$

\bigskip

(iii)$ \Rightarrow$ (ii). Let  $\displaystyle \lim_{t\to\infty}
\left\| A_{b,t} (\mathcal T) x - T_{x_0}x \right\| =0$. Take an
arbitrary $x \in X$. Then
\begin{eqnarray*}
\displaystyle
\left\| T_s (T_{x_0} x) - T_{x_0} x \right\| &=& \lim_{t\to\infty} \left\| (T_s - I) A_{b,t} (\mathcal T) x \right\| \\
  & =& \lim_{t\to\infty} \left\|  (T_s -I) \frac{1}{\int_0^t b(u) du} \int_0^t b(u) T_u x du \right\| \\
 &=& \lim_{t\to\infty} \left\| \frac{1}{\int_0^t b(u) du} \int_0^t b(u) ( T_{s+u} x - T_u x) du\right\|  =0
\end{eqnarray*}
for arbitrary $s \geq 0$. So $T_{x_0} x \in \text{Fix}(\mathcal T)$
and also $T_{x_0}^2 x = T_{x_0} T_{x_0} x = T_{x_0} f(x) x_0 =
T_{x_0} x$ for every $x \in X$. Hence $T_{x_0}$ is a continuous
projection onto $\text{Fix} (\mathcal T)$ and since for each $x\in
X$, $T_{x_0} x = f(x) x_0$, we obtain $\text{Fix} (\mathcal T) = \bc
x_0$. Therefore we have $X = \bc x_0 \oplus \text{ker} (T_{x_0})$.
It is enough to prove that
$$
\text{ker}(T_{x_0}) =  \overline{\bigcup_{t\geq 0} (I - T_t) (X)}.
$$

Let $x \in  \overline{\bigcup_{t\geq 0} (I - T_t) (X)}$, then
$$\left\|
T_{x_0} x \right\| = \lim_{t\to\infty} \left\| A_{b,t} (x) \right\|
= \lim_{t\to\infty} \left\| A_{b,t} (\mathcal T)  (I -
T_s) u \right\| =0$$ which means $x \in \text{ker} (T_{x_0})$. For
the other inclusion, assume that there exists $z \in \text{ker}
(T_{x_0}) \setminus \overline{\bigcup\limits_{t\geq 0} (I -
T_t)(X)}$. Then by Hahn-Banach Theorem, we find $g \in X^*$ such
that $ g(z)>0$ and
$g\upharpoonright_{\overline{\bigcup\limits_{t\geq 0} (I - T_t)
(X)}}=0$. The last one yields $g \circ T_t = g$, $\forall t\geq 0$.
From this we obtain
\begin{eqnarray*}
g(T_{x_0}x)&=&g(\lim_{t\to\infty}A_{b,t}x)=\lim_{t\to\infty}g(A_{b,t}x)\\[2mm]
&=&\lim_{t\to\infty} \frac{1}{\int_0^t b(u) du} \int_0^t b(u) g(T_u
x) du\\[3mm]
&=&\lim_{t\to\infty} \frac{1}{\int_0^t b(u) du} \int_0^t b(u) g(
x) du\\[3mm]
&=&g(x)
\end{eqnarray*}
Hence, $g \circ T_{x_0} = g$, this implies $g(z) =g(T_{x_0} z) =0$,
but it contradicts to the choice of  $g$.

\bigskip

Now we prove the implication (iii) $\Rightarrow$ (iv). From the
statement (iii) we have
\begin{eqnarray}\label{32}
\left\| A_{b,t}(\mathcal T)x_0- T_u(A_{b,t}(\mathcal
T)x_0)\right\|\to\|x_0-T_ux_0\| \ \ \text{as} \,\,\, t\to\infty.
\end{eqnarray}
On the other hand, due to \eqref{31} one finds
$$
\left\| A_{b,t}(\mathcal T)x_0- T_u(A_{b,t}(\mathcal
T)x_0)\right\|=\left\| A_{b,t}(\mathcal T)x_0- A_{b,t}(\mathcal
T)(T_ux_0)\right\| \to 0
$$
This with \eqref{32} implies that $T_ux_0=x_0$ for all $u\in\br_+$.
So, $x_0$ is a fixed point of $\mathcal T$. If $\tilde x\in\ck$ is
an other fixed point of $\mathcal T$, then from (iv) we obtain
$$
\|\tilde x-x_0\|=\left\| A_{b,t}(\mathcal T)\tilde x- x_0\right\|\to
0\ \ \ \ \text{as} \,\,\, t\to\infty
$$
Hence, $\tilde x=x_0$, this means $x_0$ is a unique fixed point of
$\mathcal T$, so $\mathcal T$ is uniquely ergodic.\\

 Now let us
assume that $V^*=X$, where $V$ is an order unit space. In this case,
an order unit for $V$ is the functional $f$ which generates the base
of $X$. We want to prove that (iv) implies (i).  Assume that the
semigroup $\mathcal{T}$ is uniquely ergodic, i.e. there is unique
fixed point $x_0$ of $\mathcal{T_*}$. First define a conjugate
semigroup operator $T_{*,t}:V\to V$ as follows:
$$
\langle T_{*,t}u,x\rangle=\langle u, T_tx\rangle, \ \ u\in V, x\in
X.
$$
It is clear that $T_{*,t}f=f$ for all $t\in\br_+$. The obtained
semigroup we denote $\mathcal{T_*}=(T_{*,t})_{t\geq 0}$.

By the standard argument, one can show that for every $u\in
Fix(\mathcal{T_*})\oplus\bigcup\limits_{t\geq 0} (I - T_{*,t})(V)$
we have $A_t(\mathcal{T_*})u\to Pu$, where $Pu=x_0(u)f$ is a
projection onto $Fix(\mathcal{T_*})$. To complete the proof it is
enough to show that $\text{ker} x_0=\overline{\bigcup\limits_{t\geq
0} (I - T_{*,t})(V)}$. It is clear that
$\overline{\bigcup\limits_{t\geq 0} (I - T_{*,t})(V)}\subset
\text{ker} x_0$. To establish the reverse inclusion, we suppose the
contrary, i.e. there is $u_0\in \text{ker}x_0$ such that $u_0\notin
\overline{\bigcup\limits_{t\geq 0} (I - T_{*,t})(V)}$. Then due to
Hahn-Banach theorem, one can find $h\in V^*$ such that $h(u_0)=1$
and $h\upharpoonright_{\overline{\bigcup\limits_{t\geq 0} (I -
T_{*,t})(V)}}=0$. The last one implies $T_th=h$ for all $t\in\br_+$,
i.e. $h$ is a fixed point of $\mathcal{T}$, hence $h=\l x_0$ for
some $\l\in\br$. This implies $h(u_0)=0$, which is a  contradiction.
Therefore,
$$
\lim_{t\to\infty}\|A_t(\mathcal{T_*})u-Pu\|=0, \ \ \ \textrm{for
all} \ \ u\in V.
$$
Hence, due to the duality relation, we infer
$$
\lim_{t\to\infty}\|A_t(\mathcal{T})x-T_{x_0}\|=0, \ \ \ \textrm{for
all} \ \ x\in X.
$$
This completes the proof.
\end{proof}

\begin{corollary} Let  $\mathcal T=(T_t)_{t\geq 0}$ be $C_0$-Markov semigroup on an
abstract state space $X$ such that any condition of Theorem \ref{ATT} is satisfied. Then for every $x \in X$ one has
$$
\lim_{t\to\infty}\frac1t\int_0^t T_{s^2} ds=T_{x_0}.
$$
\end{corollary}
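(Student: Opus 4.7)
The plan is to recognize the expression $\frac{1}{t}\int_0^t T_{s^2}\,ds$ as a weighted ergodic average of the form $A_{b,\tau}(\mathcal T)$ for a suitable weight $b\in\cw$, and then apply Theorem \ref{ATT}(iii).

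First I would perform the change of variable $u=s^2$, so $du=2s\,ds$ and $ds=\frac{1}{2\sqrt u}\,du$. When $s$ ranges over $[0,t]$, $u$ ranges over $[0,t^2]$, so
$$
\frac{1}{t}\int_0^t T_{s^2}\,ds \;=\; \frac{1}{t}\int_0^{t^2}\frac{1}{2\sqrt u}\,T_u\,du \;=\; \frac{1}{2t}\int_0^{t^2}\frac{T_u}{\sqrt u}\,du.
$$
Setting $b(u)=u^{-1/2}$ for $u>0$, the elementary computation $\int_0^{t^2}b(u)\,du=2\sqrt{u}\big|_0^{t^2}=2t$ yields the identification
$$
\frac{1}{t}\int_0^t T_{s^2}\,ds \;=\; \frac{1}{\int_0^{t^2}b(u)\,du}\int_0^{t^2}b(u)T_u\,du \;=\; A_{b,t^2}(\mathcal T).
$$

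Next I would verify that $b(u)=u^{-1/2}$ belongs to the class $\cw$. Since $b$ is integrable near $0$ (with $\int_0^{t_0}b(s)\,ds=2\sqrt{t_0}<\infty$) and non-increasing on $(0,\infty)$, Proposition \ref{cW}(i) applies directly (this is also covered by Example \ref{we} with exponent $\alpha=-1/2>-1$). Hence $b\in\cw$.

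Finally, since $t\to\infty$ forces $t^2\to\infty$, invoking Theorem \ref{ATT}(iii) with the weight $b$ gives, for every $x\in X$,
$$
\left\| A_{b,t^2}(\mathcal T)x-T_{x_0}x\right\| \longrightarrow 0 \quad\text{as}\quad t\to\infty,
$$
which is precisely the asserted convergence $\frac{1}{t}\int_0^t T_{s^2}\,ds \to T_{x_0}$ in the strong operator topology. No genuine obstacle arises: the whole content is the change-of-variable trick that converts the squared-time average into a power-weight Cesàro average covered by the general theorem.
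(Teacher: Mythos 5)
Your proposal is correct and follows essentially the same route as the paper: both identify the squared-time average with the weighted average $A_{b,\cdot}(\mathcal T)$ for the weight $b(u)=u^{-1/2}$ (the paper starts from $A_{b,t}(\mathcal T)$ and substitutes to reach $\frac{1}{\sqrt t}\int_0^{\sqrt t}T_{u^2}\,du$, while you substitute in the reverse direction to write $\frac{1}{t}\int_0^t T_{s^2}\,ds = A_{b,t^2}(\mathcal T)$), then invoke Example \ref{we} (or Proposition \ref{cW}(i)) and Theorem \ref{ATT}(iii). The two computations are the same argument up to the reparametrization $t\leftrightarrow t^2$, so there is nothing to add.
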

\begin{proof} Let us take a function 
$$
\tilde b(t)=\frac{1}{\sqrt{t}}, \ \ t\in\br_+
$$
which, thanks to Example \ref{we}, is a weight belonging to $\cw$. It easy to see that
$$
\int^t_0\tilde b(s)ds=2\sqrt{t}
$$
Therefore, we have
\begin{eqnarray*}
\frac{1}{\int^t_0\tilde b(s)ds}\int^t_0\tilde b(s)T_sds&=&
\frac{1}{2\sqrt{t}}\int^t_0\frac{1}{\sqrt{s}}T_sds\\[2mm]
&=&\frac{1}{\sqrt{t}}\int^{\sqrt{t}}_0T_{u^2}du.
\end{eqnarray*}

Hence,  Theorem \ref{ATT} implies the  required assertion. 
\end{proof}

\begin{remark} We stress that all obtained results can be adopted to arbitrary Banach spaces by considering a construction given in Example 3 (see Section 2).  Namely, let $X$ be a Banach space over $\br$. Consider a new Banach space
$\tilde X=\br\oplus X$. Let $(T_t)$ be a
$C_0$-semigroup of contractions of $X$ (i.e. $\|T_tx\|\leq\|x\|$ for all $x\in X$, $t\in\br_+$), then the
equality $\tilde T_t(\a,x)=(\a,T_tx)$ defines a $C_0$- Markov semigroup on
$\tilde X$. It is clear that $A_t(\tilde T)(\a,x)=(\a,A_t(T)x)$.
Hence, all obtained results can be applied to the $C_0$-Markov semigroup $(\tilde T_t)$, which allows to produce new types of theorems for
the semigroup $(T_t)$. 

\begin{itemize}
\item[(i)] We note that all obtained results extend the main results
of \cite{Kar,Mit,RKW} to abstract state spaces.

\item[(ii)] In the above construction if we consider the classical $L_p$-spaces, then one may get
the perturbation bounds for uniformly asymptotically stable Markov
chains defined on these $L_p$-spaces. Moreover, if one considers
non-commutative $L_p$-spaces, then the perturbation bounds open new
perspectives into the quantum information theory (see \cite{CRS,RKW,SN}). 
\end{itemize}

\end{remark}

\section*{acknowledgement}
The authors are grateful to professor A. Mitrophanov for his
fruitful discussions and useful suggestions.

\bibliographystyle{amsplain}

\end{document}